\numberwithin{equation}{section}
\theoremstyle{plain}
\newtheorem{theorem}{Theorem}[section]
\newtheorem{lemma}[theorem]{Lemma}
\newtheorem{corollary}[theorem]{Corollary}
\newtheorem{proposition}[theorem]{Proposition}
\theoremstyle{remark}
\newtheorem{remark}[theorem]{Remark}
\newenvironment{proofof}{\noindent\sc Proof of}{
    \hspace*{\fill} $\Box$ \vspace{2ex} }
\newcommand{\R}{\mathbb{R}}
\newcommand{\N}{\mathbb{N}}
\newcommand{\Z}{\mathbb{Z}}
\renewcommand{\AA}{\mathcal{A}}
\newcommand{\FF}{\mathcal{F}}
\newcommand{\ZZ}{\mathcal{Z}}
\newcommand{\UU}{\mathcal{U}}
\newcommand{\XX}{\mathcal{X}}
\newcommand{\VC}{\mathcal{V}}
\newcommand{\bfx}{\mathbf{x}}
\newcommand{\bfz}{\mathbf{0}}
\newcommand{\bfy}{\mathbf{y}}
\newcommand{\Ind}[1]{{\boldsymbol{1}_{\textstyle\{#1\}}}}
\newcommand{\Indd}[1]{{\boldsymbol{1}_{\{#1\}}}}
\newcommand{\indd}[1]{{\boldsymbol{1}_{#1}}}
\def\eps{\varepsilon}
\def\Min(#1,#2){#1\wedge #2}
\def\Max(#1,#2){#1\vee #2}
\newcommand{\ceil}[1]{\lceil #1 \rceil}
\newcommand{\ceill}[1]{\left\lceil #1 \right\rceil}
\newcommand{\floor}[1]{\lfloor #1 \rfloor}
\newcommand{\floorr}[1]{\left\lfloor #1 \right\rfloor}
\def\Var{{\rm Var}}
\def\Cov{{\rm Cov}}
\def\e{{\rm e}}
\def\sphere{{\mathcal S}^{d-1}}
\def\unji{{u_{n,j}^{(i)}}}
\def\Snt{{S^{(n)}_t}}
\def\Snrj{{S^{(n)}_{r_j}}}
\begin{document}
\begin{frontmatter}
\title{Statistical Inference on a Changing Extreme Value Dependence Structure}
%\title{A sample article title with some additional note\thanksref{t1}}
\runtitle{Changing Extremal Dependence}
%\thankstext{T1}{A sample additional note to the title.}

\begin{aug}
%%%%%%%%%%%%%%%%%%%%%%%%%%%%%%%%%%%%%%%%%%%%%%%
%% Only one address is permitted per author. %%
%% Only division, organization and e-mail is %%
%% included in the address.                  %%
%% Additional information can be included in %%
%% the Acknowledgments section if necessary. %%
%%%%%%%%%%%%%%%%%%%%%%%%%%%%%%%%%%%%%%%%%%%%%%%
\author[A]{\fnms{Holger} \snm{Drees}\ead[label=e1]{holger.drees@uni-hamburg.de}}
%%%%%%%%%%%%%%%%%%%%%%%%%%%%%%%%%%%%%%%%%%%%%%
%% Addresses                                %%
%%%%%%%%%%%%%%%%%%%%%%%%%%%%%%%%%%%%%%%%%%%%%%
\address[A]{Department of Mathematics,
University or Hamburg,
\printead{e1}}
\end{aug}

\begin{abstract}
  We analyze the extreme value dependence of independent, not necessarily identically distributed multivariate regularly varying random vectors. More specifically, we propose estimators of the spectral measure locally at some time point and of the spectral measures integrated over time. The uniform asymptotic normality of these estimators is proved under suitable nonparametric smoothness and regularity assumptions. We then use the process convergence of the integrated spectral measure to devise consistent tests for the null hypothesis that the spectral measure does not change over time. %The finite sample performance of these tests is investigated in Monte Carlo simulations.
\end{abstract}
\end{frontmatter}

\section{Introduction}

When analyzing the extreme value behavior of serial data, one usually assumes that the underlying time series is stationary or, somewhat weaker, that its extreme value behavior does not change over time, an assumption which is not always guaranteed to be fulfilled. The most obvious examples of data with potentially changing extreme value behavior are environmental time series over a long time horizon, but also the stationarity of returns of a financial investment should not be taken for granted, if the economic environment, the regulatory framework or the trading technology develop.

Statistical inference of the extreme value behavior of univariate random variables that depend nonparametrically on covariates (like time) has been investigated by \cite{GGL10}, \cite{DGG13} and \cite{GGS14}, among others. The basic idea is to apply localized versions of standard extreme value estimators (constructed for iid data) to observations with covariates near the point of interest. Because the neighborhood must shrink towards this point to ensure consistency and, at the same time, only extreme observations can be used, for any fixed value of the covariate only a very small fraction of the data influence the extreme value estimators, which thus converge at a slow rate. To avoid this problem, one may, of course, model the extreme value behavior as a parametric function of the covariates. This approach has often be used in extreme quantile regression; see, e.g., \cite{WangLi13} and \cite{ADDGUC20}. Since usually there is no physical justification of such a parametric relationship, tests for these model assumptions are of crucial importance.
To the best of our knowledge, the first test of such a hypothesis has been suggested and analyzed by \cite{dHZ21}, who estimated the local extreme value indices of a nonstationary sequence of independent heavy-tailed random variables locally by a Hill type estimator and then used integrals of these local estimators to devise tests for a constant extreme value index.

Similar, but even more serious problems arise if one wants to analyze the extreme value dependence of a nonstationary sequence of independent random vectors. If one assumes the random vectors to be multivariate regularly varying, then the extreme value dependence is captured by the so-called spectral or angular measures; see Section 2 for a precise definition. Since these measures constitute a nonparametric class, even for iid data the extreme value analysis of the dependence is substantially more involved than the marginal analysis. Therefore, any localized version of nonparametric estimators of the spectral measure will require large data sets to perform well. A localized estimator of the so-called Pickands dependence function, which describes the spectral measure after a suitable marginal standardization, has been analyzed by \cite{EBGG18} in a general setting. In \cite{dCD14} a baseline spectral measure is modeled nonparametrically, while a parametric model is used for the temporal development of the dependence structure. Both papers analyze  the asymptotic behavior of their estimator  under the restrictive assumption that the respective model describes the dependence structure for all observations, so that there is no need to use only extremes. In addition, \cite{dCD14} assumes that the marginal distribution is known. There are also papers which propose some models for a changing dependence and procedures for fitting such a model without investigating their performance mathematically; see, e.g., \cite{CCCW18} and
\cite{Hoga21}. Moreover, some publications like \cite{GG15} examine the asymptotic behavior of estimators of the dependence structure locally around a fixed point in a regression setting, but do not consider the global behavior over the full time interval, so that tests for a changing dependence are unfeasible.

Here we want to analyze the asymptotic behavior both of a localized version of a well-known nonparametric estimator of the spectral measure and of an integrated version thereof in a setting with nonstationary independent multivariate regularly varying observations. Unlike in previous approaches, we do not restrict ourselves to the estimation of certain cdf's which determine the spectral measure, but we prove uniform convergence over quite general families of sets which form a Vapnik-\v{C}ervonenkis (VC) class. The major application we have in mind are tests for (parametric) assumptions about trends in the dependence structure.

In Section 2, we first introduce the setting and the estimators of the spectral measure and the integrated spectral measure. Then the uniform asymptotic normality both of the local estimators at a fixed time point and of the integrated estimators over families of sets and (for the integrated version) over time is established under natural smoothness assumptions. Similarly as in \cite{dHZ21}, it turns out that the estimated spectral measure integrated over time converges at a much faster rate than the local estimator. In Section 3 we discuss how to employ these limit results to test for a constant dependence structure. The finite sample performance of these tests is examined in Section 4. In Section 5 we apply our tests to a well-known actuarial data set, while in Section 6 possible extensions of our results are discussed.
While the proofs of our main results are postponed until the last section, the proofs of the results of Section 2.1 and some technical lemmas are given in the Supplement \cite{Drees22}.

\subsection*{Notation} Throughout the paper, all random elements are defined on some probability space $(\Omega,\FF,P)$. The distribution of some random element $Z$ is denoted by $P^Z$, its conditional distribution given some event $B$ with $P(B)>0$ by $P^{Z|B}$. The maximum and minimum of two reals $x$ and $y$ are denoted by $\max(x,y)=x\vee y$ and $\min(x,y)=x\wedge y$, respectively. Moreover, we define $\floor{x}:=\max\{l\in\Z|l\le x\}$ and $\ceil{x}:=\min\{l\in\Z|l\ge x\}$. The symmetric difference between two sets $A$ and $B$ is denoted by $A\triangle B:= (A\setminus B)\cup (B\setminus A)$.

\section{Estimators of the (integrated) spectral measure}

Throughout,
$X_t$, $t\in[0,1]$, will denote independent, but not necessarily identically distributed $\R^d$-valued random vectors. For some norm $\|\cdot\|$ on $\R^d$, let
$$ R_t := \|X_t\|, \qquad \Theta_t := \frac {X_t}{\|X_t\|}. $$
We assume that, for all $t\in[0,1]$, $X_t$ is regularly varying, i.e.\ there exist $\alpha_t>0$ and a so-called  {\em spectral (probability) measure} $S_t$ on the unit sphere  $\sphere=\{x\in\R^d\mid \|x\|=1\}$ such that
\begin{equation} \label{eq:multRV}
  \lim_{u\to\infty} \frac{P\{R_t>ru, \Theta_t\in A\}}{P\{R_t>u\}} = r^{-\alpha_t} S_t(A)
\end{equation}
for all $r>0$ and all Borel sets $A\subset\sphere$ with $S_t(\partial A)=0$. (If this condition is fulfilled for some norm, then it holds for any other norm, too, but of course the spectral measure depends on the specific choice of the norm.) While the {\em index of regular variation} $\alpha_t$ describes the marginal tail behavior, the spectral measure captures the extremal dependence structure between the components of the vector $X_t$.

Suppose that $X_{i/n}$, $1\le i\le n$, are observed. (The equidistant observation scheme is not essential, but mainly due to notational convenience; see Section \ref{sect:outlook} for details.) We assume that the distribution of $X_t$ varies smoothly with $t$ (at least in extreme regions) to ensure that observations near a fixed time point $t$ contain sufficient information about the extreme value behavior of $X_t$.
In \cite{dHZ21} it is discussed how to estimate $\alpha_t$ (or its reciprocal value) and integrals thereof. Here we focus on estimators of the spectral measure $S_t$.

Our first goal is to estimate $S_t(A)$ jointly for all sets $A$ belonging to a suitable family $\AA$ of Borel sets in $\sphere$ and some fixed $t\in [0,1]$. To this end, we apply a standard nonparametric estimator of the spectral measure to observations in a neighborhood of $t$.

Fix some sequence of bandwidths $h_n\to 0$ such that $n h_n\to \infty$ and denote by
$$I_{n,t} := \{i\in\{1,\ldots,n\}\mid i/n \in (t- h_n, t+h_n]\}$$
the set of indices of those observations that lie in a $h_n$-neighborhood of $t$. Since $S_t$ is defined as a limit distribution as the norm of the vector tends to infinity, we may only use the largest observations among the $X_i$, $i\in I_{n,t}$, to estimate $S_t$. To this end, fix some intermediate sequence $k_n\to \infty$ so that $k_n=o(nh_n)$ and let $\hat u_{n,t}$ be the $k_n+1$ largest order statistic among $R_{i/n}$, $i\in I_{n,t}$. Our estimator of $S_t(A)$  is then defined by
\begin{equation*}
  \hat S_{n,t}(A) := \frac 1{k_n} \sum_{i\in I_{n,t}} \Ind{R_{i/n}>\hat u_{n,t}, \Theta_{i/n}\in A},
\end{equation*}
 with $A\in\AA$. It will turn out that under suitable conditions $\hat S_{n,t}(A)$ converge to $S_t(A)$ at the rate $k_n^{-1/2}$ uniformly over $A\in\AA$. Here one must choose $h_n$ small enough such that the distributions of $X_{i/n}$, $i\in I_{n,t}$, are sufficiently close to the one of $X_t$. Moreover, $k_n$ must be sufficiently small in comparison with $2nh_n$ so that only extreme observations enter the estimators. Hence usually the rate of convergence of this estimator will be quite slow, which is of course due to the purely nonparametric setting we work with.

Consequently, a test whether the spectral measure does not change over time based on the above local estimators will detect only rather large changes. As a naive alternative, one may think of comparing estimators of an ``average spectral measure'' over different intervals $[t_1,t_2]$ defined by the projections of all observations $X_{i/n}$ with $i/n\in[t_1,t_2]$ and radius exceeding some threshold onto the unit sphere. However, this approach only makes sense if the distribution of the radius does not change over time. If, for instance, the radii of observations in the first half of a time interval have substantially lighter tails than in the second half, then asymptotically only observations in the second half will be used in the estimator and changes in the dependence structure in the first half will not be detected.

To obtain a more reliable picture how the spectral measure evolves over time (in particular to construct tests of a stationary dependence), we thus consider estimators of the {\em integrated spectral measure}
$$IS_t(A):=\int_0^t S_r(A)\, dr$$
 for all $t\in[0,1]$ and $A\in\AA$.  To be more specific, we integrate our estimator of the spectral measure after a suitable  discretization of  the time parameter, which  ensures that the estimator can be written as a sum with independent summands:
\begin{equation*}
  \widehat{IS}_{n,t}(A) := \int_0^t \hat S_{n,(2\ceil{r/(2h_n)}-1)h_n}(A)\, dr
\end{equation*}
with $\hat S_{n,r}:=\hat S_{n,1}$ for $r>1$. The crucial difference to the aforementioned naive approach is that now at different time points different thresholds are used to define which observations are considered ``large''.

There is no obvious interpretation of the integrated spectral measure, and to the best of our knowledge it has not been considered before. However, in nonparametric regression, it is a time-tested approach to examine integrated versions of functions of interest, in order to devise better statistical tests; see e.g.\ \cite{Delgado93} or \cite{GMC13}, p.\ 368 and 380, and the literature cited therein.

For simplicity, we assume throughout the paper that, for some $u_0>0$, all marginal cdf's $F_t$ of $R_t$, $t\in[0,1]$, are continuous on $(u_0,\infty)$. Moreover, w.l.o.g.\ we assume that $\AA$ comprises $\sphere$.

\subsection{Asymptotic behavior of $\hat S_{n,t}$ for fixed $t\in[0,1]$}
\label{sect:fixedt}

Fix some $t\in[0,1]$  and some intermediate sequence $k_n=o(nh_n)$, and define
\begin{align*}
   u_{n,t}  := \inf \{u>0 \mid \nu_{n,t}(u,\infty)\le k_n\} \quad \text{with} \quad \nu_{n,t} & := \sum_{i\in I_{n,t}} P^{R_{i/n}}.
\end{align*}
Since $u_{n,t}\to\infty$ and $F_{i/n}$ is continuous on $(u_0,\infty)$,  one has $\nu_{n,t}(u_{n,t},\infty)=k_n$ for sufficiently large $n$.

We assume that there exists some $\eps>0$ such that the following conditions hold:

{\bf (RV1)}\;  \parbox[t]{12cm}{$\displaystyle \sup_{s\in[1-\eps,1+\eps]} \bigg| \frac{\nu_{n,t}(su_{n,t},\infty)}{k_n}-s^{-\alpha_t}\bigg| \to 0.$}

{\bf (RV2)}\; \parbox[t]{12cm}{$\displaystyle \sup_{s\in[1-\eps,1+\eps],A\in\AA^*} k_n^{1/2}\bigg| \frac{\sum_{i\in I_{n,t}} P\{R_{i/n}>s u_{n,t}, \Theta_{i/n}\in A\}}{\nu_{n,t}(su_{n,t},\infty)}-S_t(A)\bigg| \to 0$\\
  with $\AA^* :=\{A_1 \triangle A_2,A_1\cap A_2\mid A_1, A_2\in\AA\}.$}

{\bf (A)}\; \parbox[t]{13cm}{The family $\AA$ of subsets of $\sphere$ forms a VC class with VC-index $\VC$ and it is totally bounded w.r.t.\ the semi-metric $\rho_t(A,B) := S_t(A\triangle B)$, $A,B\in\AA$. Moreover, for all $n\in\N$, $r\in [0,1]$ and $i\in I_{n,r}$, the processes $[1-\eps,1+\eps]\times \AA\ni (s,A)\mapsto \Indd{R_{i/n}>su_{n,r}, \Theta_{i/n}\in A}$ are separable.}

Roughly speaking, the first two assumptions ensure that, in the average, the extreme value behavior of $X_{i/n}$ for $i\in I_{n,t}$ is similar to the one of $X_t$ and that the  approximation suggested by \eqref{eq:multRV} is sufficiently accurate for the thresholds under consideration. They are fulfilled if
$$  \frac{P\{R_r>s u_{n,t}\}}{P\{R_t>s u_{n,t}\}} \to 1, \quad
  P(\Theta_r\in A\mid R_r>su_{n,t})= S_t(A) + o(k_n^{-1/2})
$$
uniformly for $r\in[t-h_n,t+h_n]\cap[0,1]$, $A\in\AA^*$ and $s\in[1-\eps,1+\eps]$.

 Condition (A) restricts the complexity of the family $\AA$; see e.g.\ Section 2.6 of \cite{vdVW96} for an introduction to VC theory. Note that under Condition (A) the extended family $\AA^*$ is a VC class, too (\cite{vdVW96}, Lemma 2.6.17). A typical example of a family $\AA$ fulfilling Condition (A) is $\{[\bfz,\bfx]\cap\sphere \mid \bfx\in[0,\infty)^d\}$ if $\|\cdot\|$ is the $p$-norm for some $p\in[1,\infty)$. While the VC-property is well known (see e.g.\ Ex.\ 2.6.1 of \cite{vdVW96}), the total boundedness of $\AA$ w.r.t.\ $S_t$ follows from the fact that $\bfy\mapsto S_t([\bfz,pr^{-1}(\bfy)]\cap\sphere)$ (with $pr:\sphere\to\R^{d-1}$ denoting the projection onto the first $d-1$ coordinates) defines a cdf of some measure $G$ on $[0,1]^{d-1}$. To each $\eps>0$, one can decompose $[0,1]^{d-1}$ into  finitely many points of mass and strips of the form $[0,1]^{l-1}\times I\times [0,1]^{d-l-1}$ for some interval $I$, such that every strip has mass less than $\eps/d$ outside the given points of mass. It is then easily seen that each of the rectangles obtained by intersecting $d-1$ of these strips (and excluding the points of mass) form a set with diameter less than $\eps$ w.r.t.\ the semi-metric $\rho_G(\bfx,\bfy):=G\big([\bf0,\bfx]\triangle [\bf0,\bfy]\big)$. The total boundedness property of $\AA$ now follows from the above construction of $G$.

\begin{theorem} \label{theo:fixedt}
  If the Conditions (RV1), (RV2) and (A) are fulfilled then, for all $t\in [0,1]$, $k_n^{1/2}(\hat S_{n,t}(A)-S_t(A))_{A\in\AA}$ converges weakly to a centered Gaussian process $Z_t$ with covariance function $c_t(A,B):= \Cov(Z_t(A),$ $Z_t(B))= S_t(A\cap B)-S_t(A)S_t(B)$.
\end{theorem}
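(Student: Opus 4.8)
The plan is to express $k_n^{1/2}\big(\hat S_{n,t}(A)-S_t(A)\big)$ through the processes $Z_{n,t}$ of Proposition~\ref{prop:fixedt_procconv}, evaluated at the data-driven radius $\hat s_{n,t}:=\hat u_{n,t}/u_{n,t}$, and then to let $n\to\infty$ by combining the asymptotic equicontinuity established in that proposition with the consistency from Corollary~\ref{corol:consist} and a continuous-mapping argument.

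Write $\bar Z_{n,t}(s,A):=k_n^{-1/2}\sum_{i\in I_{n,t}}\Indd{R_{i/n}>su_{n,t},\Theta_{i/n}\in A}$ for the non-centered counterpart of $Z_{n,t}$, so that $\bar Z_{n,t}(s,A)=Z_{n,t}(s,A)+k_n^{1/2}p_{n,t}(s,A)$ with $p_{n,t}(s,A):=k_n^{-1}\sum_{i\in I_{n,t}}P\{R_{i/n}>su_{n,t},\Theta_{i/n}\in A\}$. By construction of $\hat u_{n,t}$, $\hat S_{n,t}(\sphere)=1$ and $k_n^{1/2}\hat S_{n,t}(A)=\bar Z_{n,t}(\hat s_{n,t},A)$, hence $\bar Z_{n,t}(\hat s_{n,t},\sphere)=k_n^{1/2}$ and
\begin{align*}
 k_n^{1/2}\big(\hat S_{n,t}(A)-S_t(A)\big)
 &=\bar Z_{n,t}(\hat s_{n,t},A)-S_t(A)\,\bar Z_{n,t}(\hat s_{n,t},\sphere)\\
 &=\big(Z_{n,t}(\hat s_{n,t},A)-S_t(A)Z_{n,t}(\hat s_{n,t},\sphere)\big)+k_n^{1/2}\big(p_{n,t}(\hat s_{n,t},A)-S_t(A)p_{n,t}(\hat s_{n,t},\sphere)\big).
\end{align*}
By (RV1) and (RV2), $p_{n,t}(s,A)=s^{-\alpha_t}S_t(A)+o(k_n^{-1/2})$ uniformly for $s\in[1-\eps,1+\eps]$ and $A\in\AA$; since $\hat s_{n,t}\in[1-\eps,1+\eps]$ with probability tending to one (Corollary~\ref{corol:consist}), the leading contributions $\hat s_{n,t}^{-\alpha_t}S_t(A)$ cancel in the second bracket, which therefore equals $o_P(1)$ in $\ell^\infty(\AA)$. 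Hence
\[
 k_n^{1/2}\big(\hat S_{n,t}(\cdot)-S_t(\cdot)\big)=Z_{n,t}(\hat s_{n,t},\cdot)-S_t(\cdot)\,Z_{n,t}(\hat s_{n,t},\sphere)+o_P(1)\quad\text{in }\ell^\infty(\AA).
\]

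To pass to the limit, observe that by Corollary~\ref{corol:consist} $\rho\big((\hat s_{n,t},A),(1,A)\big)=|\hat s_{n,t}^{-\alpha_t}-1|\to 0$ in probability, uniformly in $A$, so the asymptotic $\rho$-equicontinuity of $Z_{n,t}$ from Proposition~\ref{prop:fixedt_procconv} gives $\sup_{A\in\AA}\big|Z_{n,t}(\hat s_{n,t},A)-Z_{n,t}(1,A)\big|\to 0$ in probability, and likewise for the $\sphere$-term. Since the restriction $Z_{n,t}(1,\cdot)$ converges weakly in $\ell^\infty(\AA)$ to $Z_t(1,\cdot)$ and the map $z\mapsto z(\cdot)-S_t(\cdot)\,z(\sphere)$ is Lipschitz on $\ell^\infty(\AA)$, the continuous mapping theorem together with Slutsky's lemma yields
\[
 k_n^{1/2}\big(\hat S_{n,t}(\cdot)-S_t(\cdot)\big)\ \Rightarrow\ \tilde Z_t(\cdot):=Z_t(1,\cdot)-S_t(\cdot)\,Z_t(1,\sphere),
\]
which is a centered Gaussian process. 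Finally, from $\Cov\big(Z_t(1,A),Z_t(1,B)\big)=S_t(A\cap B)$ (the case $r=s=1$ in Proposition~\ref{prop:fixedt_procconv}), $S_t(\sphere)=1$ and bilinearity,
\[
 \Cov\big(\tilde Z_t(A),\tilde Z_t(B)\big)=S_t(A\cap B)-S_t(A)S_t(B)-S_t(A)S_t(B)+S_t(A)S_t(B)=S_t(A\cap B)-S_t(A)S_t(B),
\]
which is the asserted covariance.

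The step that I expect to require the most care is the cancellation of the deterministic contributions above: it relies on the regular-variation convergence in (RV1) and (RV2) holding with a remainder of order $o(k_n^{-1/2})$, and it is essential that $\hat S_{n,t}$ is self-normalized, so that the error $k_n^{1/2}\big(\hat s_{n,t}^{-\alpha_t}-1\big)S_t(A)$ stemming from the estimation of $u_{n,t}$ is automatically accounted for by the term $-S_t(A)Z_{n,t}(\hat s_{n,t},\sphere)$ instead of surviving as an uncontrolled bias. Transferring the equicontinuity from the deterministic radius $1$ to the random radius $\hat s_{n,t}$, and the concluding continuous-mapping and covariance computations, are then routine given Proposition~\ref{prop:fixedt_procconv} and Corollary~\ref{corol:consist}.
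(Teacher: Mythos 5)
Your argument is correct and follows essentially the same route as the paper: represent $k_n^{1/2}(\hat S_{n,t}-S_t)$ via the process $Z_{n,t}$ evaluated at the random radius $\hat s_{n,t}=\hat u_{n,t}/u_{n,t}$, use the consistency of $\hat u_{n,t}$ together with the asymptotic $\rho$-equicontinuity from Proposition 7.1 to replace $\hat s_{n,t}$ by $1$, kill the deterministic bias by the self-normalization, and finish with continuous mapping and a covariance computation (the paper phrases the last passage via Skorohod representation, which is an immaterial difference). One small caution: your intermediate claim $p_{n,t}(s,A)=s^{-\alpha_t}S_t(A)+o(k_n^{-1/2})$ asks (RV1) for a rate it need not provide; the cancellation should instead be justified by factoring $p_{n,t}(s,A)-S_t(A)p_{n,t}(s,\sphere)=k_n^{-1}\nu_{n,t}(su_{n,t},\infty)\big(S_{n,t}(s,A)-S_t(A)\big)$, where only the ratio term requires the $o(k_n^{-1/2})$ rate, which is exactly what (RV2) supplies, while (RV1) merely gives $\nu_{n,t}(su_{n,t},\infty)=O(k_n)$.
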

The proof is given in the Supplement \cite{Drees22}.

\subsection{Asymptotic behavior of $\widehat{IS}_{n,\cdot}$}
\label{sect:integratedSM}

First note that $\hat S_{n,r}$ and $\hat S_{n,t}$ are independent if $|t-r|\ge 2h_n$. Due to the discretization of the time index, the estimator of the integrated spectral measure can thus be written as a weighted sum of independent terms, which are in general not identically distributed:
$$ \widehat{IS}_{n,t}(A) = 2h_n \sum_{j=1}^{\floor{t/(2h_n)}} \hat S_{n,r_j} (A)+ \big(t-2h_n\floor{t/(2h_n)}\big)  \hat S_{n,r_{\floor{t/(2h_n)}+1}}(A),
$$
with
$$r_j := (2j-1)h_n.$$
 However, the approximation to $\hat S_{n,t}$ established in the preceding subsection is too crude to derive a non-trivial limit of the estimator of the integrated spectral measure. In fact, the techniques used to establish the asymptotics of the estimator of the spectral measure for a fixed time point are not applicable in the present context. Instead, we first analyze the pseudo-estimator
\begin{align*}
  \widetilde{IS}_{n,t}(A) & := \int_0^t \tilde S_{n,(2\ceil{r/(2h_n)}-1)h_n}(A)\, dr \nonumber\\
   & = 2h_n \sum_{j=1}^{\floor{t/(2h_n)}} \tilde S_{n,r_j}(A) + \big(t-2h_n\floor{t/(2h_n)}\big)  \tilde S_{n,r_{\floor{t/(2h_n)}+1}} (A)
   %\nonumber\\
%   & = \sum_{j=1}^{J_n} Y_{n,j}(t,A)   \label{eq:Ynjdef}
\end{align*}
with
\begin{align}
 \tilde S_{n,r}(A) & := \frac 1{N_r} \sum_{i\in I_{n,r}} \Indd{R_{i/n}>u_{n,r}, \Theta_{i/n}\in A},\nonumber \\
 N_r & := \sum_{i\in I_{n,r}} \Indd{R_{i/n}>u_{n,r}}, \label{eq:Nrdef}
% Y_{n,j}(t,A) & := \big[(t-2h_n(j-1))^+ \wedge 2h_n\big]\tilde S_{n,r_j}(A)\\
 %J_n & := \ceil{1/(2h_n)}
\end{align}
where the order statistic $\hat u_{n,t}$ has been replaced by the unknown quantile $u_{n,t}$. Here and in what follows, we use the convention $0/0:=0$. In a second step, we show that the difference to the actual estimator is asymptotically negligible.

%{\bf (URV1)}\;  \parbox[t]{15cm}{$\displaystyle \sup_{s\in[1-\eps,1+\eps], t\in[0,1]} \bigg| \frac{\nu_{n,t}(su_{n,t},\infty)}{k_n}-s^{-\alpha_t}\bigg| \to 0.$}
To this end, we need the following conditions to hold for some $\eps>0$ and some sequences $q_n,q_n^*$ and $q_n'$ tending to 0:

{\bf (US)}\; \parbox[t]{12.5cm}{$\displaystyle \sup_{|r-t|\le h_n, t\in[0,1],A\in\AA^* } \big|P(\Theta_{r}\in A \mid R_r> u_{n,t})-P(\Theta_{t}\in A \mid R_t> u_{n,t})\big| =O(q_n) $}

{\bf (US*)}\; \parbox[t]{12cm}{$\displaystyle \sup_{|r-t|\le h_n, t\in[0,1],s\in[1-\eps,1+\eps],A\in\AA^* } \big|P(\Theta_{r}\in A \mid R_r> u_{n,t})$ \\ \hspace*{6cm} $\displaystyle  -P(\Theta_{r}\in A \mid R_r> su_{n,t})\big| = O(q_n^*) $}

{\bf (B)}\; \parbox[t]{12cm}{$\displaystyle \sup_{t\in[0,1],A\in\AA^* } \big|P(\Theta_{t}\in A \mid R_t> u_{n,t})-S_t(A)\big| = O(q_n') $}

{\bf (IS)}\; \parbox[t]{12cm}{The function $r\mapsto S_r(A)$ is continuous for all $A\in\AA^*$ and
 $$\displaystyle \sup_{A\in\AA^*}\Big(\frac{k_n}{h_n}\Big)^{1/2}\int_0^1 \big|S_{(2\ceil{r/(2h_n)}-1)h_n}(A) - S_r(A)\big|\, dr\to 0.$$}

{\bf (A$^*$)}\; \parbox[t]{12.8cm}{The family $\AA$ of subsets of $\sphere$ forms a VC-class with VC-index $\VC$ and it is totally bounded w.r.t.\ the semi-metric $\rho_I(A,B) := \int_0^1 S_r(A\triangle B)\, dr$, $A,B\in\AA$. Moreover, for all $n\in\N$, $r\in [0,1]$ and $i\in I_{n,r}$, the processes $\AA\ni A\mapsto \Indd{R_{i/n}>u_{n,r}, \Theta_{i/n}\in A}$ are separable.}

{\bf (R)}\; \parbox[t]{13cm}{$k_nh_n\to 0$,\; $\log^3 h_n=o(k_n)$, \; $\max(q_n,q_n^*)= o\big((k_n\log(k_n/h_n))^{-1}\big)$, and %\linebreak
$\max(q_n,q_n',q_n^*)=$ $o\big((h_n/k_n)^{1/2}\big)$,}

{\bf (L)}\; \parbox[t]{13cm}{$\displaystyle
  \eta  := \liminf_{n\to\infty} \inf_{t\in[0,1], s\in\{1-\eps,1+\eps\}} \bigg| \frac{\nu_{n,t}(su_{n,t},\infty)}{k_n}-1\bigg|>0.
$}

(US) and (US$^*$) are uniform smoothness conditions. While (US) compares the conditional distributions of exceedances over the same high threshold at different time points, (US$^*$) compares the distributions of exceedances over slightly different thresholds at the same time. Condition (B) (jointly with the rate condition (R)) ensures that the bias of the estimator that is caused by the approximation of the limit in \eqref{eq:multRV} is asymptotically negligible. Condition (R) subsumes all conditions on the different rates. Of course, large parts of (R) could have been incorporated in the first three conditions, but the contributions of different types of approximation errors become more transparent  in the present formulation of the technical results of Section \ref{sect:proofs}. Note that the dimension $d$ does not occur in this set of rate conditions and so there is no inherent curse of dimensionality. However, in most cases the accuracy of the approximation (B) will deteriorate as the dimension increases, e.g.\ if $S_t$ has a density, because then a continuous measure is approximated by discrete ones whose points of mass become more sparsely scattered as the dimension increases.

Condition (IS) is satisfied if the spectral measure varies with $t$ sufficiently smoothly. In particular, if $t\mapsto S_t(A)$ is Lipschitz continuous uniformly in $A$, then (IS) is fulfilled if $k_nh_n\to 0$. Finally, condition (L) is a technical condition on the behavior of $R_t$ as a function of $t$, which is substantially weaker than the condition (RV1) used in the analysis of $\hat S_{n,t}$. All these conditions are verified for a semiparametric model with Fr\'{e}chet marginals and Gumbel copulas in Section 10 of the Supplement \cite{Drees22}.

%$\displaystyle 2h_n \sum_{j=1}^{\floor{t/(2h_n)} } S_{(2j-1)h_n}(A) \to \int_0^t S_r(A)\, dr$ \quad uniformly for $t\in[0,1]$ and $A\in A^*$.}

The next result describes the asymptotic behavior of the pseudo-estimator.
\begin{proposition} \label{prop:IZtilde}
    Under the Conditions  (US), (B), (IS), (A$^*$) and (R), the  processes %$\widetilde{IZ}$ defined by
    $$ \widetilde{IZ}_{n,t}(A)  := \Big(\frac{k_n}{h_n}\Big)^{1/2}\big(\widetilde{IS}_{n,t}(A)-IS_{t}(A)\big), \qquad t\in[0,1],A\in\AA,
    $$
     converge to  a centered Gaussian process $IZ$ with covariance function \\ $\Cov(IZ_s(A),IZ_t(B))=2\int_0^{s\wedge t} c_r(A,B)\, dr$.
\end{proposition}

Our main result shows that replacing the unknown quantile $u_{n,t}$ with its empirical counterpart does not change the limit distribution of the estimator of the integrated spectral measure:
\begin{theorem} \label{theo:IZhat}
  Under the Conditions (US), (US$^*$), (B), (IS), (A$^*$), (R) and (L), the  processes
  $$ \widehat{IZ}_{n,t}(A)  := \Big(\frac{k_n}{h_n}\Big)^{1/2}\big(\widehat{IS}_{n,t}(A)-IS_{t}(A)\big), \qquad t\in[0,1],A\in\AA,
    $$
     converge to the centered Gaussian process $IZ$ described in Proposition \ref{prop:IZtilde}.
\end{theorem}

Note that $\widehat{IS}_{n,t}$ converges to the true integrated spectral measure at a much faster rate than $\hat S_{n,t}$ converges to the spectral measure. It is thus better suited to test hypotheses about changes of the spectral measure over time. In the next section, we discuss how Theorem \ref{theo:IZhat} can be employed to construct  consistent tests for the null hypothesis that the dependence structure does not change over time, i.e.\ that $S_t$ is the same measure for all $t\in[0,1]$.

\section{Testing for a changing dependence structure}
\label{sect:tests}

In this section, we derive tests for the null hypothesis that the spectral measure remains constant  over time, i.e.\ $S_t=S_1$ for all $t\in[0,1]$ (while the marginal distributions may change). If one assumes that the functions $t\mapsto S_t(A)$ are continuous for all sets of some measure determining family of sets $A$ (which we do for the sets $A\in\AA$ anyway),  then the null hypothesis can be rephrased as $IS_t=t \cdot IS_1$ for all $t\in[0,1]$.

In view of Theorem \ref{theo:IZhat}, it suggests itself to choose some functional of the process $\widehat{IS}_{n,t}-t \widehat{IS}_{n,1}$ as test statistic, e.g.\
\begin{align}
  T_n^{(KS)} & := \Big(\frac{k_n}{2h_n}\Big)^{1/2} \sup_{t\in[0,1],A\in\AA} \big|\widehat{IS}_{n,t}(A)-t \widehat{IS}_{n,1}(A)\big|, \label{eq:KStest}\\
  T_n^{(CM)} & := \frac{k_n}{2h_n} \sup_{A\in\AA} \int_0^1 \big(\widehat{IS}_{n,t}(A)-t \widehat{IS}_{n,1}(A)\big)^2\, dt. \label{eq:CMtest} %\quad \text{or}
 % T''_n & := \frac{k_n}{2h_n}  \int_0^1 \sup_{A\in\AA} \big(\widehat{IS}_{n,t}(A)-t \widehat{IS}_{n,1}(A)\big)^2\, dt.
\end{align}
\begin{remark}
  Instead of $ T_n^{(CM)}$, one may also consider a test statistic which integrates the supremum of the squared difference. However, it might be challenging to calculate this integral exactly, since, for fixed $i$, the set at which the supremum is attained need not be the same for all values $t\in [(i-1)2h_n,i2h_n]$.
\end{remark}

The following result establishes the asymptotic behavior of these test statistics under the null hypothesis and under fixed alternatives.
\begin{corollary} \label{cor:testconstancy}
  \begin{enumerate}
    \item If $S_t=S_1$ holds for all $t\in[0,1]$, and the Conditions (US), (US$^*$), (B), (A$^*$), (R) and (L) are fulfilled, then
        \begin{align*}
           T_n^{(KS)}  \to \sup_{t\in [0,1], A\in\AA} |\ZZ_t(A)|,\qquad
             T_n^{(CM)}  \to \sup_{A\in\AA} \int_0^1 \ZZ_t^2(A)\, dt
           %T''_n & \to \int_0^1 \sup_{A\in\AA} \ZZ_t^2(A)\, dt
        \end{align*}
       weakly for a centered Gaussian process $\ZZ = (\ZZ_t(A))_{t\in[0,1],A\in\AA}$ with covariance function $Cov(\ZZ_s(A),\ZZ_t(B))=(s\wedge t-st) (S_1(A\cap B)-S_1(A)S_1(B))$.
    \item If $IS_t(A)\ne t\cdot IS_1(A)$ holds for some $t\in[0,1]$ and some $A\in\AA$, and the Conditions (US), (US$^*$), (B), (IS), (A$^*$), (R) and (L) are all met, then the statistics $T_n^{(KS)}$ and $ T_n^{(CM)}$ converge to $\infty$ in probability.
  \end{enumerate}
\end{corollary}

By Corollary \ref{cor:testconstancy} (ii) any test that rejects the null hypothesis if one of the test statistics exceeds a critical value will be consistent against alternatives that fulfill the conditions of Theorem \ref{theo:IZhat}, provided that the family $\AA$ is sufficiently rich to pick up the deviation from the null. To ensure consistency of such a test against general alternatives, $\AA$ must be measure determining, i.e.\ any two measures $\mu_1$ and $\mu_2$ on the unit sphere coincide if $\mu_1(A)=\mu_2(A)$ holds for all $A\in\AA$.

Since the limit distributions in Corollary \ref{cor:testconstancy} (i) may depend on the unknown spectral measure $S_1$,  in general it is not straightforward to determine a critical value $c_\alpha$ such that $\Indd{T_n>c_\alpha}$ is an asymptotic level $\alpha$ test. However, if the family $\AA$ is linearly ordered (i.e.\ for all $A,B\in\AA$ one has $A\subset B$ or $B\subset A$), then the problem can be reduced to analyzing the pertaining functionals of a Brownian pillow $W$, that is, a centered Gaussian process on $[0,1]^2$ with covariance function
$$ Cov\big(W(s_1,t_1),W(s_2,t_2)\big)=(s_1\wedge s_2-s_1s_2)\cdot(t_1\wedge t_2-t_1t_2). $$
To see this, note that then one has $S_1(A\cap B)=S_1(A)\wedge S_2(B)$ for all $A,B\in\AA$. Hence the process $\ZZ$ has the same distribution as $(W(t,S_1(A)))_{t\in[0,1], A\in\AA}$ and thus
$$
P\Big\{\sup_{t\in[0,1],A\in\AA}|\ZZ_t(A)|>c_\alpha\Big\}\le P\Big\{\sup_{s,t\in[0,1]}|W(s,t)|>c_\alpha\Big\}.
$$
Here even equality holds if $\{S_1(A)|A\in\AA\}=[0,1]$, which will typically be fulfilled if $S_1$ is continuous. The distributions of the supremum  and other functionals of a Brownian pillow have been examined by \cite{Hash10} and in Example A.2.12 of \cite{vdVW96}. In \cite{KonProt03} numerical approximations for critical values  of Kolmogorov-Smirnov type statistics (as for $T_n^{(KS)}$) and of combinations of Cram\'{e}r-von Mises type and Kolmogorov-Smirnov type statistics (as in $T_n^{(CM)}$) are given.

If $\AA$ is not linearly ordered, then in general $Cov(\ZZ_s(A),\ZZ_t(B))\le Cov(W(s,S_1(A)),$ $W(t,S_1(B)))$, and the Slepian inequality shows that the supremum of the Brownian pillow is stochastically dominated by the supremum of the process $\ZZ$. (Whether this also holds true for the supremum of the absolute value is not clear, though.) Hence, most likely the above approach does not work any more.
Unfortunately, the conditions that $\AA$ is linearly ordered (used to determine a critical value)  and that it is measure determining (to ensure consistency against general alternatives) cannot jointly be fulfilled unless $d=2$, that is, if bivariate random vectors are observed.

Using general bounds on exceedance probabilities of Gaussian processes, one may derive critical values that ensure that the probability of a type 1 error of a test based on $T_n^{(KS)}$ does not exceed a given size; see the Supplement \cite{Drees22} for details. However, in general these tests will be extremely conservative.
 As an alternative approach, we thus suggest to determine a critical value by simulations from a centered Gaussian process $\hat\ZZ$ with covariance function $Cov(\hat\ZZ(s,A),\hat\ZZ(t,B)) = (s\wedge t-st)(\widehat{IS}_1(A\cap B)-\widehat{IS}_1(A)\widehat{IS}_1(B))$. (Note that even under the null hypothesis of a stationary dependence structure, one cannot use the standard estimator for the spectral measure, because the marginal distributions may change nevertheless.)

Since $\widehat{IS}_1$ is a discrete measure with finite support (i.e.\ of the form $\sum_{l=1}^m p_l \eps_{\theta_l}$ with $\eps_\theta$ denoting the Dirac measure at $\theta$), the simulation of $\hat\ZZ$ is quite easy. For each point $\theta_l$ of the support, simulate an independent copy  $B_l$ of a Brownian bridge. Then $(\hat \ZZ_t(A))_{t\in[0,1],A\in\AA}$ has the same distribution as
\begin{equation} \label{eq:simlimitprocess}
  \bigg(\sum_{l:\theta_l\in A} \sqrt{p_l}B_l(t) - \sum_{l:\theta_l\in A}p_l \cdot\sum_{l=1}^m \sqrt{p_l}B_l(t)\bigg)_{t\in[0,1],A\in\AA},
\end{equation}
since both processes are centered Gaussian with the same covariance function. Under condition (A) or (A$^*$), for each $t\in[0,1]$ the processes attain at most $O(m^{\VC})$ different values, and so the supremum can be approximately calculated if the Brownian bridges are discretized in a suitable way. If  $ 2nh_n$ is a natural number, $n$ is a multiple of this number and $S_1$ is continuous, then one has $m=k_n/(2h_n)$ and $p_l=1/m$ almost surely, which further simplifies the numerical calculations.

If $\VC$ is large, then the above procedure is computationally very demanding. This will typically be a problem if the dimension $d$ of the observations is greater than 5, say, and the sample size $n$ is large, too. In that case, we propose to evaluate the process $\hat \ZZ_t$ only on a finely discretized subset of $\AA$. In any case, if very high dimensional data is observed then, as explained in Section 2, the rate of convergence implicitly given by the bias condition (B) will typically be slow, and one should not expect that a fully nonparametric approach gives reliable results. In such a situation, it seems advisable to apply some dimension reduction technique first; see \cite{EI21} for a recent survey of such methods.

\section{Simulations}  \label{sect:simus}

In this section, the finite sample performance of the tests proposed in Section \ref{sect:tests} is investigated in Monte Carlo simulations.

We consider $d$ dimensional observations for $d\in\{2,3\}$ with copula belonging to one of the following families:
\begin{itemize}
  \item Gumbel copula $C^G_\lambda(x)= \exp\big( -\big(\sum_{j=1}^d (-\log x_j)^\lambda\big)^{1/\lambda}\big)$, $x\in[0,1]^d$, for some $\lambda\ge 1$,
  \item $t$-copula, i.e., the copula of a multivariate $t$-distribution with density
   $$t_{\nu,\Sigma_\rho} (x)=\big(\det(\Sigma_\rho)(\nu\pi)^d\big)^{-1/2}\frac{\Gamma((\nu+d)/2)}{\Gamma(\nu/2)} \Big(1+\frac{x'\Sigma_\rho^{-1}x}{\nu}\Big)^{-(\nu+d)/2}
   $$
   with $\nu$ degrees of freedom, $(\Sigma_\rho)_{ii}=1$ and $(\Sigma_\rho)_{ij}=\rho$ for $i\ne j$ and some $\rho\in[0,1]$.
\end{itemize}
In most cases, we choose a Fr\'{e}chet distribution with cdf $F_\alpha(x)=\exp(-x^{-\alpha})$ as marginal distributions, but in some simulations we multiply the vector $X_t$ with a time-varying factor $c(t)=1+\sin(2\pi t)/2$, $t\in[0,1]$,
in order to check the (in)sensitivity of the tests against changes of the  marginal distributions.

All simulated samples have size $n=2000$. They are divided into blocks of length $b\in\{50,100,200\}$ (which corresponds to $h_n\in\{1/80, 1/40, 1/20\}$). In each block, the $k_n\in\{5,10,20\}$ vectors with largest Euclidean norm are used to estimate the local spectral measure.
The tests under consideration are based on the family of sets of the type $A_y:=\{x\in\sphere\mid x_i\le y_i, \forall 1\le i\le d-1\}$ for all $y\in [0,1]^{d-1}$ with $\|y\|\le 1$. %(corresponding to the family $\AA_2$ mentioned in Section \ref{sect:tests}).

For the bivariate observations, we use the critical values suggested by Corollary \ref{cor:testconstancy}, which can be obtained from simulations of Brownian pillows as described in Section \ref{sect:tests}. To this end, we have simulated 10\,000 Brownian pillows on the grid $\{0.001\cdot(i,j)\mid 0\le i,j\le 1000\}$. The resulting critical values for both tests and nominal size $5\%$ and $10\%$ are given in Table \ref{table:critval}. In what follows, always tests with nominal size $5\%$ are considered.

\begin{table}
 \caption{Asymptotic critical values of tests based on \eqref{eq:KStest} and \eqref{eq:CMtest} in dimension $d=2$ \label{table:critval}}
  \centerline{\begin{tabular}{l|cc}
    & \multicolumn{2}{c}{nominal size} \\
    & 0.05 & 0.10\\ \hline
    $T_n^{(KS)}$ & 0.8135 & 0.7626\\
    $T_n^{(CM)}$ & 0.1939 & 0.1621
  \end{tabular}}
\end{table}

In dimension $d=3$, we employ  the approach outlined at the end of the preceding chapter, which is based on the limit process under the null hypothesis with estimated spectral measure. To this end, first a finite subfamily of sets $A\in\AA$ is determined for which $\widehat{IS}_1$ attains all possible values. Then the processes $(\hat\ZZ_t(A))_{t\in[0,1]}$ were simulated $m=200$ times on the grid $2h_ni$, $i\in\{0,1,\ldots,1/(2h_n)\}$ using \eqref{eq:simlimitprocess}. While this grid is quite coarse for some of the bandwidths, this choice seems natural as it mimics the discretization used to calculate the test statistics. From these simulations, one can easily calculate an estimate of the limit distributions of the normalized test statistics, and hence an estimated $p$-value.
All reported values of the empirical power function are obtained from 1000 simulations for each model and each parameter setting and are rounded to two digits.

We first present the results for the bivariate models.
In Table \ref{table:empsizes2d}, the empirical probability of a type 1 error of both tests are given for different models, block lengths $b$ and numbers $k$ of order statistics. Note that in the second model the observations are not identically distributed, since the marginal scale parameters vary  over time by a factor of 3.
 (Further simulation results can be found in the Supplement \cite{Drees22}.) The upper value in each field corresponds to the test based on $T_n^{(KS)}$, while the lower gives the empirical probability of a type 1 error of the test pertaining to $T_n^{(CM)}$. In all models, the size exceeds the nominal level 0.05 by at most 0.01 for both tests. The Kolmogorov-Smirnov type test is often quite conservative, in particular for large block lengths. This may be explained by the fact that, in the definition of $T_n^{(KS)}$, the supremum over $t$ can only be attained at multiples of $2h_n=b/n$, while in the simulation of the limit distribution the maximum over a much finer grid is calculated, leading to larger critical values. In contrast, for the Cram\'{e}r-von Mises type statistic $T_n^{(CM)}$ a finer grid need not result in a larger critical value.

\begin{table}
  \caption{Empirical probability of a type 1 error of test based on \eqref{eq:KStest} (upper value) and \eqref{eq:CMtest} (lower value) for $d=2$ \label{table:empsizes2d}}
  \centerline{\small
  \begin{tabular}{r|rrr|rrr|rrr}
    $b$ & \multicolumn{3}{c|}{50} & \multicolumn{3}{c|}{100} & \multicolumn{3}{c}{200}\\
    $k$ & 5 & 10 & 20 & 5 & 10 & 20 & 5 & 10 & 20 \\ \hline
    Gumbel copula $\lambda=2$,  & 0.04 &	0.04& 0.04	&	0.02 &	0.02 &	0.03 &		0.01 &	0.02 &	0.02 \\
    Fr\'{e}chet $\alpha=2$ & 0.05 &	0.06&	0.06	&	0.04&	0.04	&0.05	&	0.03&	0.04&	0.05\\ \hline
    Gumbel copula $\lambda=2$, &   0.03&	0.03	&0.04	&	0.03&	0.02&	0.03&		0.02	&0.01 & 0.01\\
    Fr\'{e}chet $\alpha=4$, with sine-factor & 0.04&	0.04&	0.04	&	0.04&	0.05&	0.05&		0.03&	0.03&	0.04\\ \hline
    $t_2$-copula, $\rho=0$ & 0.02&	0.02&	0.03&		0.02&	0.02&	0.02&		0.02&	0.03&	0.02\\
    Fr\'{e}chet $\alpha=4$ & 0.04 &	0.04	&0.03	&	0.05&	0.04&	0.06	&	0.04	& 0.05&	0.04\\ \hline
    $t_1$-copula,$\rho=0$ &  0.04	&0.04	&0.03	&	0.03&	0.02&	0.02	&	0.02	& 0.02&	0.02\\
    Fr\'{e}chet $\alpha=4$ & 0.05 &	0.06	&0.06	& 0.05 &0.04	&0.04	&	0.03&	0.04&	0.04
   \end{tabular}}
\end{table}

To examine the power of the tests under the alternative hypothesis, we consider three different models with time varying spectral measures:
\begin{itemize}
  \item {\bf``G linear''}: The dependence of the observations is modeled by a Gumbel copula with parameter $\lambda$ increasing linearly over time from 2 to $\lambda_1\in\{2.5,3, 3.5,\ldots,6\}$.
  \item {\bf``t linear''}: The random vectors have a $t_2$-copula with matrix $\Sigma_\rho$ and $\rho$ linearly increasing from 0 to $\rho_1\in\{0.25, 0.5, 0.6, 0.7, 0.75, 0.8, 0.9, 0.95\}$.
   \item {\bf``t jump''}: The random vectors have a $t_2$-copula with matrix $\Sigma_\rho$ and $\rho$ being equal to 0 for $t\le 1/2$ and equal to $\rho_1\in\{0.2, 0.3, 0.4, 0.5, 0.6, 0.7, 0.75, 0.8, 0.9\}$ for $t>0.5$.
\end{itemize}
While in the first two models the dependence structure changes smoothly over time, in the third model there is a structural break at $t=1/2$. In all models, the marginal distributions are Fr\'{e}chet with $\alpha=4$. The corresponding power functions are shown in Figures \ref{fig:Gumbellin2d}--\ref{fig:t2jump2d}.
% (The results for further models are reported in the Supplement \cite{Drees22}.)

In all settings, the test based on the statistic $T_n^{(CM)}$ performs substantially better than the test using $T_n^{(KS)}$. The superiority  of the Cram\'{e}r-von Mises type test is particularly pronounced in the first two models when the dependence structure changes gradually. Moreover, it is clearly advisable to use rather a short block length, because the power is very low for block length $b=200$. This is not surprising since it is difficult to detect any change in the dependence structure if there are only a few blocks available. Conversely, the test based on $T_n^{(KS)}$ performs better for distributions in the alternative hypothesis when the dependence structure is similar in the beginning and in the end of the time interval, but differs in the middle; see the Supplement \cite{Drees22} for details.

Finally, in all simulations the tests have larger power if one uses a larger number of observations in each block. However, one has to ensure that these observations actually reflect the extreme value dependence structure. Otherwise, the tests may detect changes in the dependence structure which are not present in the extreme regions  one is interested in.

\begin{figure}[tb]
\includegraphics[width=13cm,height=4cm]{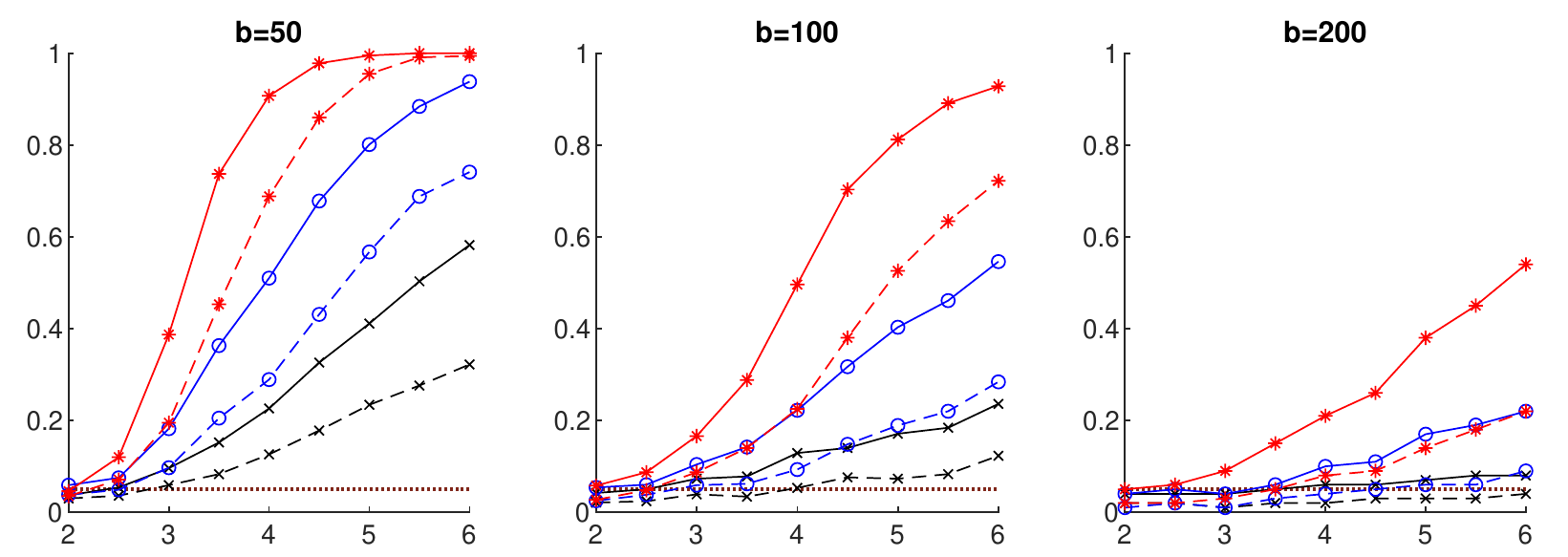}
\caption{Empirical power of tests based on $T_n^{(CM)}$ (solid line) and $T_n^{(KS)}$ (dashed line) in model ``G linear''  for $k=5$ (black $\times$), $k=10$ (blue $\circ$) and $k=20$ (red $*$) largest observations in each block and different block lengths $b$; the nominal size is indicated by the brown dotted horizontal line.
\label{fig:Gumbellin2d}}
\end{figure}

\begin{figure}[tb]
\includegraphics[width=13cm,height=4cm]{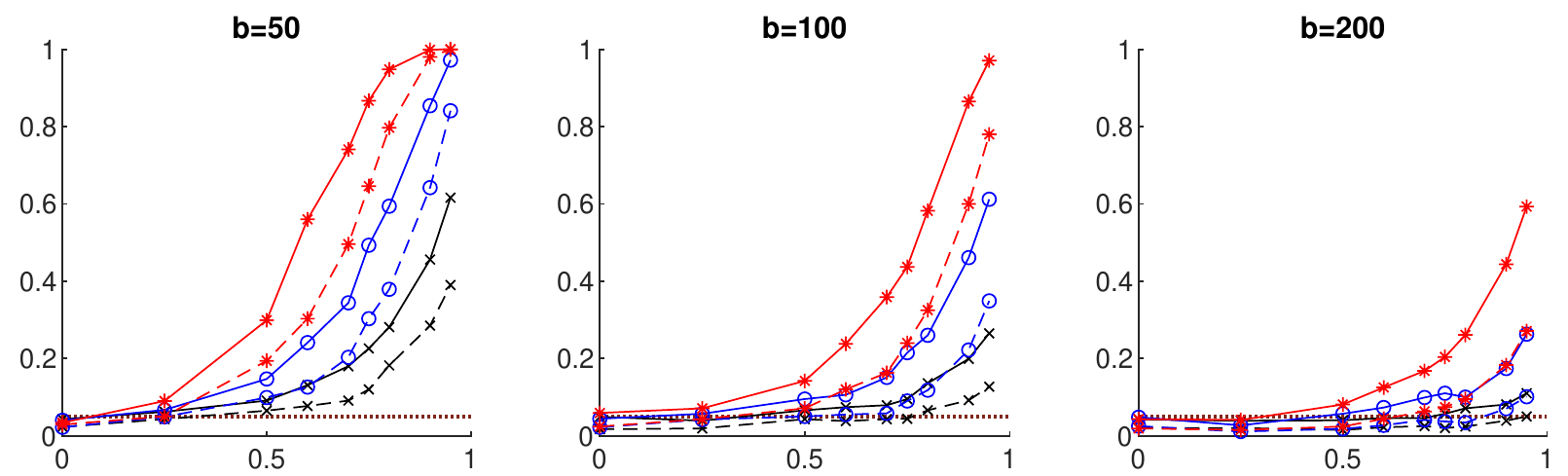}
\caption{Empirical power of tests based on $T_n^{(CM)}$ (solid line) and $T_n^{(KS)}$ (dashed line) in model ``t linear'' %for $k=5$ (black $\times$), $k=10$ (blue $\circ$) and $k=20$ (red $*$) largest observations in each block and different block lengths $b$; the nominal size is indicated by the brown dotted line.
\label{fig:t2lin2d}}
\end{figure}

\begin{figure}[tb]
\includegraphics[width=13cm,height=4cm]{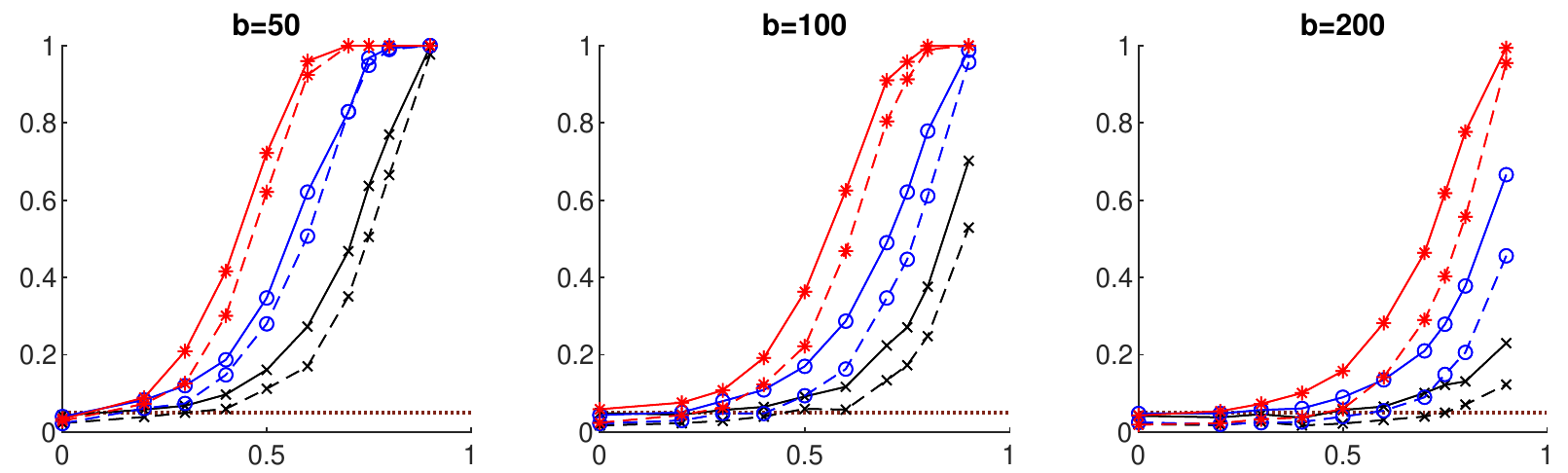}
\caption{Empirical power of tests based on $T_n^{(CM)}$ (solid line) and $T_n^{(KS)}$ (dashed line) in model ``t jump''% for $k=5$ (black $\times$), $k=10$ (blue $\circ$) and $k=20$ (red $*$) largest observations in each block and different block lengths $b$; the nominal size is indicated by the brown dotted line.
\label{fig:t2jump2d}}
\end{figure}

We now discuss our simulation results in dimension $d=3$. Since then the critical value is determined by simulation for each sample separately, simulating a large number of samples becomes computationally demanding. We thus examine fewer parameter constellations and focus on block lengths $b\in\{50,100\}$. Here we also consider a model with different marginal distributions (denoted by ``different Fr\'{e}chet'' in the tables): while first all marginal cdfs are again chosen as $F_4$, we then add 1 to the $i$th coordinate and multiply it with $i$ for $i\in\{2,3\}$. Table \ref{table:empsizes3d} gives the empirical probability of a type 1 error of our tests for different models with Gumbel- or $t_2$-copula. Again, the empirical sizes exceed the nominal value 0.05 only slightly. Since we have adapted the discretization of the estimated limit process to the block length used by the test as described above, the Kolmogorov-Smirnov type test is not as conservative as in the bivariate models.

\begin{table}[bt]
    \caption{Empirical probability of  type 1 error of test based on $T_n^{(KS)}$ (upper value) and $T_n^{(CM)}$ (lower value) for $d=3$ \label{table:empsizes3d}}
  \centerline{\small
  \begin{tabular}{r|rrr|rrr}
    $b$ & \multicolumn{3}{c|}{50} & \multicolumn{3}{c}{100}\\
    $k$ & 5 & 10 & 20 & 5 & 10 & 20  \\ \hline
    Gumbel copula  $\lambda=2$,  & 0.07	&0.06	&0.07	&	0.06	&0.06	&0.06
 \\
    different Fr\'{e}chet & 0.07&	0.06&	0.06	&	0.06&	0.07&	0.05
\\ \hline
    Gumbel copula $\lambda=2$, &   0.06	&0.07	&0.05	&	0.08	&0.06	&0.06\\
    Fr\'{e}chet $\alpha=4$, with sine-factor & 0.05	&0.07&	0.06	&	0.06	&0.06	& 0.05\\ \hline
    $t_2$-copula,  & 0.07&	0.06&	0.06	&	0.07&	0.07	&0.05
\\
    Fr\'{e}chet $\alpha=4$ & 0.07&	0.05&	0.06	&	0.06&	0.07&	0.04
\\ \hline
    $t_2$-copula, &  0.05	&0.05&	0.05	&	0.08&	0.07&	0.07
\\
    Fr\'{e}chet $\alpha=4$, with sine-factor & 0.05	&0.05	&0.05	&	0.05	&0.06	&0.06
   \end{tabular}}
\end{table}

Table \ref{table:emppower3d} summarizes the power of the tests for our models with changing extreme value dependence structure and Fr\'{e}chet marginals with $\alpha=4$.
\begin{table}[bt]
  \caption{Empirical power of tests based on $T_n^{(KS)}$ (upper value) and $T_n^{(CM)}$ (lower value), $d=3$.\label{table:emppower3d}}
  \centerline{\small
  \begin{tabular}{r|rrr|rrr}
    $b$ & \multicolumn{3}{c|}{50} & \multicolumn{3}{c}{100}\\
    $k$ & 5 & 10 & 20 & 5 & 10 & 20  \\ \hline
    model ``Gumbel linear'', $\lambda_1=3$ & 0.11 &	0.14&	0.25	&	0.08&	0.09	& 0.14\\
    different Fr\'{e}chet \hspace*{1.2cm}& 0.12	&0.18	&0.3	&	0.08	&0.09	&0.15\\[0.5ex]
    $\lambda_1=4$ & 0.18	&0.34	&0.7	&	0.10&	0.15&	0.26\\
    & 0.20	&0.41&	0.83	&	0.10&	0.19	&0.44 \\[0.5ex]
    $\lambda_1=5$ & 0.26	&0.53&	0.94	&	0.15&	0.29	&0.60\\
    & 0.34&	0.67&	0.98&		0.18&	0.35&	0.7 \\ \hline
    model ``t linear'', $\rho_1=0.05$ & 0.14&	0.26&	0.46	&	0.10&	0.16	&0.28\\
    & 0.16	&0.29&	0.54	&	0.11&	0.18&	0.31\\[0.5ex]
    $\rho_1=0.75$ & 0.41&	0.73&	0.97&		0.20&	0.39&	0.74\\
    & 0.48	&0.81	&0.99	&	0.21	&0.46&	0.83\\[0.5ex]
    $\rho_1=0.9$ & 0.85	&0.98&	1.00	&	0.59&	0.83&	0.98\\
    & 0.89&	0.99	&1.00	&	0.63	&0.87&	1.00\\ \hline
    model ``t jump'', $\rho_1 = 0.5$ & 0.40	&0.72	&0.98	&	0.22&	0.38&	0.73\\
        & 0.38	&0.68&	0.97	&	0.18&	0.36&	0.69\\[0.5ex]
        $\rho_1=0.75$ & 0.95&	1.00	&1.00	&	0.63&	0.95&	1.00\\
        & 0.95	&1.00&	1.00	&	0.60&	0.94	& 1.00
  \end{tabular}}
\end{table}
By and large, the findings are the same as for the bivariate models. However, now the Kolmogorov-Smirnov type test performs almost as well as the Cram\'{e}r-von Mises type test.

\section{Application}

We consider a data set of 6870 fire insurance claims of Danish insurance companies from 01/1980 to 12/2002 in Danish crowns (DKK). Each claim size is divided into a loss to buildings, a loss to content and a loss to profit, whereby one or two components can be equal to 0 and only claims with total size of at least one million DKK have been recorded. More details about this data set can be found in \cite{DM08}.

Since for most claims the loss of profit equals 0, here we only analyze the dependence between the other two components. To this end, we consider the same family of test sets as in Section \ref{sect:simus}, that is, we compare the functions $\widehat{IS}_{n,t}\big(\{(x_1,x_2)\in\mathcal{S}^1|x_1\le x\}$, $x\in[0,1]$, which can be interpreted as the cdf of the measure obtained by projecting the sphere onto the unit interval (cf.\ the discussion of condition (A)).

We estimate the integrated spectral measure for different block lengths (for simplicity discarding any remaining claims) and different numbers of order statistics $k_n$ per block. It turns out that $\widehat{IS}_{n,1}$ is less sensitive to the choice of $k_n$ if one uses larger blocks, e.g.\ $b=300$. The estimates are then quite stable up to $k_n=40$. The corresponding values of the
test statistics $T_n^{(KS)}$ and $T_n^{(CM)}$ are 0.415 and 0.043, so that the null hypothesis of a constant spectral measure cannot be rejected at 10\% level.

However, the sampling scheme introduces an artificial negative dependence between both components, because a small loss to a building is only registered if it is accompanied by a large loss to the content, and vice versa. This artificial dependence may mask a change in the ``true'' dependence between the claim components over time. To overcome this potential artefact,  in addition we analyze the subsample of those claims where both components exceed the threshold of one million DKK, resulting in 779 observations.

The methodology sketched above suggests a block length of $b=50$ and to use $k_n=10$ order statistics in each block. The resulting values of the test statics are now $T_n^{(KS)}=0.953$ and $T_n^{(CM)}=0.384$, i.e.\ both tests reject the null hypothesis at level $1\%$.

In order to confirm this finding and to understand better in which way the dependence structure changes over time we plot the ``cdf'' of the estimated integrated spectral measure based on the subsamples of the first 400 and of the remaining observations (see Figure \ref{fig:cdfcomp}). Whereas both cdf's are quite similar on $[0,0.3]$ and $[0.6,1]$, the cdf of the first subsample is larger in the middle part, indicating that in the second part of the observations more mass is concentrated near the main diagonal, i.e., the losses to buildings and to content are more often of a similar size than in the first part.

\begin{figure}[tb]
\includegraphics[width=10cm,height=5cm]{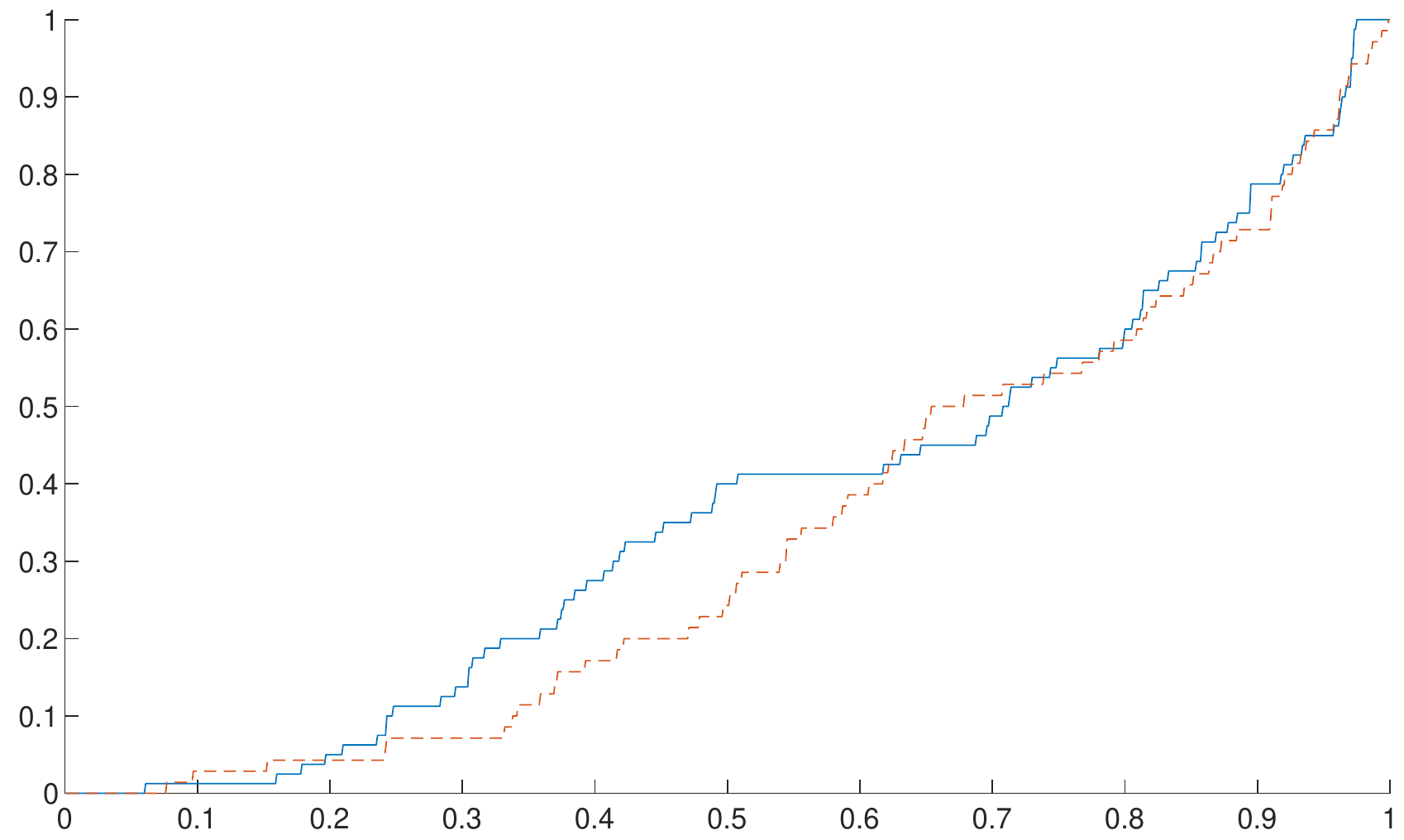}
\caption{Estimated ``cdf'' of the integrated spectral measure for the first 400 (blue, solid line) and the remaining observations (red, dashed line)
\label{fig:cdfcomp}}
\end{figure}

\section{Modifications and Outlook}  \label{sect:outlook}

\subsection*{More general sampling schemes}

So far we have assumed that the observations are equidistant in time. This, however, does not play any role in the analysis of the local estimator $\hat S_{n,t}$ for fixed $t$, and it is also not essential for the estimator of the integrated spectral measure, provided the same number $k_n$ of extreme observations is used in all sub-intervals $(2(j-1)h_n,2jh_n]$, $1\le j\le \floor{1/(2h_n)}$. Indeed, if the random variables $X_{t_{n,i}}$, $1\le i\le n$, are observed and one redefines $I_{n,t}:=\big\{i\in \{1,\ldots,n\}\mid t_{n,i}\in (t-h_n,t+h_n]\big\}$ and modifies the definitions of $\hat u_{n,t}$, $\nu_{n,t}$, $S_{n,t}$, $\tilde S_{n,r}$, $N_r$, $w_{n,r}$, $u_{n,j}^{(i)}$ and $u_{n,j}^{(i,k)}$ accordingly, then basically the same proofs still yield the main result Theorem \ref{theo:IZhat}. If the number of extremes used for estimation varies smoothly over time, but it is always of the same order as $k_n$, i.e., for some smooth function $c$, $k_n(t)\sim c(t) k_n$ largest observations are used to estimate $S_t$, then one may still prove an analog to Theorem \ref{theo:IZhat}, but the limit process depends on $c$. This dependence makes it more cumbersome to determine the critical values of the test procedures discussed in Section \ref{sect:tests}, but they can still be applied.
Note that the assumption that a similar number of extreme observations is used for all sub-intervals is quite mild if the number of observations does not vary strongly. In particular, we have to assume that the number of observations in each subinterval is of larger order than $k_n$ for condition (B) to make sense.

\subsection*{Degenerate spectral measures}

If one component of the random vector $X_t$ is asymptotically independent from the others (in the extreme value sense), then the spectral measure is concentrated on the union of the corresponding axis and its orthogonal complement. (More generally, the spectral measure may be supported by a finite union
 of lower dimensional spaces.)
In principle, our theory also applies in this case if the family $\AA$ of test sets is chosen carefully. As already known from the classical Donsker theorem, one cannot expect that uniform convergence of an empirical process holds in a neighborhood of a jump point of the true cdf. Similarly, in our setting the boundary of our test sets ought to be bounded away from any lower dimensional subspaces that have positive mass under the spectral measures.

For example, assume that for $d=3$ one wants to test for changes of the dependence structure in a situation when $S_t$ is concentrated on $M:=\{(1,0,0)\}\cup(\{(0,y,z)|y,z\ge 0\}\cap\mathcal{S}^2)$ for at least some time points $t$. We propose to first use our tests with a family of sets whose boundary if bounded away from $M$, like $\{[\eps\mathbf{1},\bfx]\cap\mathcal{S}^2 \mid \bfx\in[\eps,\infty)^3\}$ for some small $\eps>0$, in order to check whether the spectral measure is concentrated on $M$ for {\em all} $t\in[0,1]$. If this is the case, then in the next step one may test the constancy of the spectral measures of just the last two components of the vectors (e.g.\ using the family of test sets employed in the simulations for dimension 2). If these seem to be constant, too, then finally one may easily check whether all spectral measures put the same mass on the $x$-axis. If none of these three tests indicate a change, then one may work with the assumption that the dependence structure does not change over time.

\subsection*{Marginal standardization}

While we allow for a very general temporal development of the marginal distributions, at any fixed time we basically require a similar tail behavior for all marginal distributions. More precisely, the spectral measure $S_t$  only captures the extreme value dependence between the components with the heaviest tails, whereas all lighter tailed components vanish asymptotically. Such an approach is natural in financial risk management when one is interested in the overall risk of some investment portfolio; see Remark 1.3 of \cite{YSC20}.

In contrast, if one is interested in the dependence structure in the sense of classical extreme value theory, then the marginal distributions must first be standardized; see Part II of \cite{dehaan2006}. If the marginal distributions are unknown, they must be estimated either semi-parametrically using generalized Pareto approximations or non-parametrically by the empirical cdf.

In the former approach, the resulting limit distribution of $\hat S_t$ and $\widehat{IS}_t$ will depend on the marginal distribution if the same number of largest observations is used for marginal tail estimation and for estimation of the dependence; see \cite{EdHS97}, Theorem 3.4. Consequently testing for a constant extreme value dependence while still allowing for quite general smoothly varying marginal distributions will become infeasible. To avoid this problem, one has to impose much more severe restrictions on the marginal behavior over time, e.g., by assuming some heteroscedastic model as in \cite{EdHZ16}. Since such an assumption does not fit well to our completely nonparametric model of the dependence structure, we do not follow this path.

The nonparametric approach to marginal standardization does not lead to the problem sketched above, but even in a setting with identically distributed observations few results are available about the asymptotic behavior of the estimator of the spectral measure after rank standardization. Most publications about the statistical analysis of the extreme value dependence consider the tail copula or the stable tail dependence function, because these functions can be expressed in terms of probabilities of rectangles or the complement of rectangles and the shape of these rectangles is not altered by the marginal standardization. However, the estimation of the spectral measure requires to analyze the probability that a marginally standardized observation falls into a set of the form $\{rx| r>1,x\in A\}$ for some $A\subset\sphere$, which is a much more challenging task.
\cite{EdHP01} established the limit distribution in dimension $d=2$ for sets $A$ of the form $A=\{(x_1,x_2)\in\mathcal{S}^1| x_1\le x\}$ under quite involved conditions which are difficult to verify. To the best of our knowledge, no such results are known for substantially different families of sets or for dimension $d>2$. For that reason, it will be difficult to prove a counterpart to our main results after a rank based marginal standardization.

\subsection*{Change point detection}

Assume that the spectral measure is constant up to some change point $t_0$, it then changes and remains constant after this change point. If Theorem \ref{theo:IZhat} applies to both subsamples $X_{i/n}$, $i/n \in(0,t_0]$ and $X_{i/n}$, $i/n \in(t_0,1]$, then one may conclude the asymptotic behavior of
 the process
\begin{align*}
   %\Big(\frac{k_n}{2h_n}\Big)^{1/2}
   \sup_{A\in\AA} \big|\widehat{IS}_{n,t}(A)-t \widehat{IS}_{n,1}(A)\big| & = t(1-t) \sup_{A\in\AA} \big| \widehat{MS}_{n,(0,t]}(A)-\widehat{MS}_{n,(t,1]}(A)\big|,
\end{align*}
$   t\in [0,1]$, with
$$ \widehat{MS}_{n,(a,b]}(A):= \frac 1{b-a} \int_a^b \hat S_{n,(2\ceil{r/(2h_n)}-1)h_n}(A)\, dr
$$
being an estimator of the average spectral measure over the time interval $(a,b]$.
The point of maximum of this process may be considered a (CUSUM type) estimator of $t_0$. Since by definition the process is small for $t$ close to the boundary of the unit interval, this change point estimator will be biased in that it tends to yield some value near the center of the interval. To overcome this disadvantage, it seems natural to introduce a weight factor that is a decreasing function of $t(1-t)$. However, then a refined version of our main result would be needed to derive the asymptotic behavior of the new estimator of the change point, which is beyond the scope of our investigations.

\subsection*{Serial dependence}

Throughout we assumed independence of the observations, because otherwise the limit distribution of the integrated spectral measure will depend on the form of serial dependence, rendering the tests for constant spectral measure infeasible. In practical applications, e.g.\ based on environmental data, though, one will often encounter some serial dependence which vanishes for observations that are sufficiently separated in time. In such a case, it may still be justified to use our results (at least from a practical point of view) if the time periods over which the serial dependence seem relevant are much shorter than the time scale over which a change of the spectral measure may happen. For example, while weather patterns rarely last longer than a couple of weeks, decades seems a more appropriate scale to analyze changes in the climate. Hence an analysis using the methods outlined here may be feasible if one uses observations that are sufficiently separated in time (like monthly maxima).

\section{Proofs}
  \label{sect:proofs}

The proof of Theorem  \ref{theo:fixedt} is given in the Supplement \cite{Drees22}.

\subsection{Proofs to Subsection \ref{sect:integratedSM}}

The proof of Proposition \ref{prop:IZtilde} consists of two main steps. In Proposition \ref{cor:intprocapprox} we show that $\widetilde{IZ}_{n,\cdot}(\cdot)$ can be approximated by a sum of $n$ {\em independent} random variables. This enables us to apply standard techniques from \cite{vdVW96} to prove asymptotic tightness of the process and, by applying the CLT by Lindberg and Feller, the asserted convergence.

The most  intricate part in the proof of our main result Theorem \ref{theo:IZhat} is to show that the difference between the processes $\widetilde{IZ}_n$ and $\widehat{IZ}_n$ using deterministic respectively random thresholds is asymptotically negligible. This difference can be expressed as a sum of independent terms $\Delta_{n,j}(A)$, the first two moments of which are uniformly bounded in the Lemmas \ref{lemma:firstmoment} and \ref{lemma:secondmoment}. We then employ an idea from a proof given in \cite{vdVW96} to show in Proposition \ref{prop:sumdeltasqconv} that the sum of the squared terms $\Delta_{n,j}^2(A)$ are uniformly negligible, from which the convergence of $\widehat{IZ}_n-\widetilde{IZ}_n$ can be concluded.

Over the course of these proofs, the following uniform bound of Bernstein type on the numerator of one summand of $\widetilde{IZ}_{n,\cdot}(\cdot)$, which is proved in the Supplement \cite{Drees22}, is used several times. Let
\begin{equation*} %\label{eq:wnrAdef}
  w_{n,r}(A) := \sum_{i\in I_{n,r}} P\{R_{i/n}>u_{n,r},\Theta_{i/n}\in A\}, \quad r\in[0,1], A\in\AA^*.
\end{equation*}
In particular, eventually $w_{n,r}(\sphere)=\nu_{n,r}(u_{n,r},\infty)=k_n$ for all $r\in[0,1]$.
\begin{lemma} \label{lemma:bernstein}
  Fix some $d>0$. If (A$^*$) holds and $\log h_n =o(k_n)$, then there exists a constant $K$, depending only on $\VC$, such that eventually for all $r\in[0,1]$
  \begin{align}
    P\Big\{\sup_{A\in\AA}\Big|\sum_{i\in I_{n,r}} \Indd{R_{i/n}>u_{n,r},\Theta_{i/n}\in A} -w_{n,r}(A)\Big|\ge d\big(k_n\log(k_n/h_n)\big)^{1/2}\Big\} %\nonumber\\
     & \le K\Big(\frac{h_n}{k_n}\Big)^{d^2/50-\VC}.\label{eq:bernstein}
  \end{align}
\end{lemma}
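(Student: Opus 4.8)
The plan is to recognise the random quantity inside the probability as the supremum of an empirical process indexed by the \emph{fixed} VC class $\tilde\FF:=\{\indd{(1,\infty)\times A}\mid A\in\AA\}$, and then to combine a bound on the expected value of this supremum with Talagrand's concentration inequality. Writing $\xi_i:=(R_{i/n}/u_{n,r},\Theta_{i/n})$ for $i\in I_{n,r}$ — independent random elements of $(0,\infty)\times\sphere$, cf.\ the proof of Proposition \ref{prop:fixedt_procconv} — the object to be controlled is
\[
 Z_{n,r}:=\sup_{A\in\AA}\Big|\sum_{i\in I_{n,r}}\big(\indd{(1,\infty)\times A}(\xi_i)-E\,\indd{(1,\infty)\times A}(\xi_i)\big)\Big|.
\]
By Condition (A$^*$) and Lemma 2.6.17(vii) of \cite{vdVW96}, $\tilde\FF$ is a VC class of index $\VC$ with constant envelope $1$; moreover, uniformly in $r\in[0,1]$ and $A\in\AA$, (RV1) gives $\sum_{i\in I_{n,r}}\Var\big(\indd{(1,\infty)\times A}(\xi_i)\big)\le\nu_{n,r}(u_{n,r},\infty)=k_n(1+o(1))$, whereas the maximal individual exceedance probability $\max_{i\in I_{n,r}}P\{R_{i/n}>u_{n,r}\}$ is, by the standing conditions on $u_{n,r}$ and $h_n$, of order $h_n/k_n=o(1)$.

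First I would bound the mean $E[Z_{n,r}]$. Applying Dudley's entropy integral to the uniform covering numbers of $\tilde\FF$ provided by Theorem 2.6.7 of \cite{vdVW96} — equivalently, invoking a maximal inequality for uniformly bounded VC classes in the spirit of Einmahl--Mason — yields a constant $C_0$, depending only on $\VC$, such that for all large $n$
\[
 E[Z_{n,r}]\le C_0\big(\VC\,k_n\log(k_n/h_n)\big)^{1/2}
\]
uniformly in $r$; here it is precisely the smallness of the relative variances, $\max_iP\{R_{i/n}>u_{n,r}\}\asymp h_n/k_n$, that supplies the logarithmic factor $\log(k_n/h_n)$, and $\log h_n=o(k_n)$ ensures the lower-order term $\VC\log(k_n/h_n)$ in the maximal inequality is dominated by the displayed one. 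Next I would feed this into Talagrand's concentration inequality for the supremum of an empirical process with independent summands (for instance in Bousquet's form), bounding $P\{Z_{n,r}\ge E[Z_{n,r}]+t\}$ by $\exp\!\big(-t^2/(2(k_n(1+o(1))+2E[Z_{n,r}])+\tfrac23 t)\big)$.

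Choosing $t:=d\big(k_n\log(k_n/h_n)\big)^{1/2}-E[Z_{n,r}]$, which is nonnegative once $d\ge C_0\sqrt{\VC}$, and using that $\log(k_n/h_n)=o(k_n)$ forces $t=o(k_n)$ and $E[Z_{n,r}]=o(k_n)$, the denominator reduces to $2k_n(1+o(1))$, so the exponent is at least $(d-C_0\sqrt{\VC})^2\log(k_n/h_n)/(2(1+o(1)))$. An elementary estimate of the quadratic $(d-C_0\sqrt{\VC})^2$ — valid provided the maximal inequality is sharp enough that $C_0$ is not too large, and otherwise absorbed into $K$ — shows this exceeds $(d^2/50-\VC)\log(k_n/h_n)=-\log\big((h_n/k_n)^{d^2/50-\VC}\big)$ for all large $n$, which is \eqref{eq:bernstein}; in the complementary range $d<C_0\sqrt{\VC}$ one has $d^2/50-\VC<0$, hence $(h_n/k_n)^{d^2/50-\VC}>1$ and the inequality is trivial. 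Uniformity in $r$ is inherited from the uniform versions of (RV1) and (A$^*$), and $K$ depends only on $\VC$.

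The main obstacle is the first step: obtaining a bound on $E[Z_{n,r}]$ of the precise order $\big(\VC\,k_n\log(k_n/h_n)\big)^{1/2}$ — the correct logarithmic factor requires genuinely exploiting that the individual exceedance probabilities are as small as $h_n/k_n$ rather than merely $o(1)$ — uniformly in $r\in[0,1]$ with a constant governed solely by $\VC$. Once that is in hand, the passage through Talagrand's inequality, including the accounting for the numerical constant ``$50$'', is routine.
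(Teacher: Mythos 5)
Your overall strategy (bound the mean of the supremum, then concentrate around it via Talagrand/Bousquet) is a legitimate modern alternative to the paper's argument, which instead symmetrizes (Lemma 2.3.7 of \cite{vdVW96}), reduces the supremum to a maximum over the $O(k_n^{\VC-1})$ sets realized by $\AA$ on the exceedances via Sauer's lemma, and applies Bernstein conditionally to each Rademacher sum with variance proxy $\tfrac32 k_n$; there the exponent $d^2/50$ arises from $32\cdot\tfrac32=48<50$ and the $-\VC$ from the Sauer cardinality bound. However, as written your proposal has two genuine gaps. First, the individual exceedance probabilities are \emph{not} of order $h_n/k_n$: since $\sum_{i\in I_{n,r}}P\{R_{i/n}>u_{n,r}\}=\nu_{n,r}(u_{n,r},\infty)\sim k_n$ and $|I_{n,r}|\asymp nh_n$, they are of order $k_n/(nh_n)$. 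Consequently the Dudley/Einmahl--Mason route you describe, driven by the marginal variance scale $\sigma^2\asymp k_n/(nh_n)$, produces a logarithmic factor $\log(nh_n/k_n)$, not $\log(k_n/h_n)$; under the lemma's sole hypothesis $\log h_n=o(k_n)$ these can be of different orders (e.g.\ $k_n$ growing only poly-logarithmically in $n$), in which case your bound on $E[Z_{n,r}]$ can exceed the deviation level $d(k_n\log(k_n/h_n))^{1/2}$ and the centering step collapses. The correct way to get a mean bound of order $(\VC\,k_n\log k_n)^{1/2}\le(\VC\,k_n\log(k_n/h_n))^{1/2}$ is to localize the entropy to the roughly $k_n$ points with $R_{i/n}>u_{n,r}$ --- i.e.\ exactly the Sauer-lemma device the paper uses --- rather than to work with uniform covering numbers at scale $\sigma$.

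Second, the lemma asserts the exponent $d^2/50-\VC$ for \emph{every} $d>0$, and your constant bookkeeping does not close. Your exponent is $(d-C_0\sqrt{\VC})^2/(2(1+o(1)))$ for $d\ge C_0\sqrt{\VC}$, and you need this to dominate $(d^2/50-\VC)\log(k_n/h_n)$ while declaring the range $d<C_0\sqrt{\VC}$ trivial because $d^2/50-\VC<0$ there. Both steps require $C_0\le\sqrt{50}$ (in fact somewhat less once the mismatch of logarithmic factors degrades $C_0$ further); if $C_0>\sqrt{50}$ there is an intermediate range of $d$ where $d^2/50-\VC>0$, so the right-hand side of \eqref{eq:bernstein} tends to zero and cannot ``absorb'' anything into $K$, yet your Talagrand bound gives nothing. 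Since explicit constants in VC maximal inequalities are rarely that small without dedicated effort, this is not a cosmetic issue: the specific constant $50$ in the statement is engineered to match the paper's symmetrization--Sauer--Bernstein computation, and a proof must actually produce it.
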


As a consequence we can derive an approximation of the standardized estimator of the integrated spectral measure by a structurally simpler expression. To this end, let
\begin{align*}
  Z_{n,t}(A) & :=  k_n^{-1/2}\sum_{i\in I_{n,t}} \Big( \Indd{R_{i/n}>u_{n,t},\Theta_{i/n}\in A}- P\{R_{i/n}>u_{n,t},\Theta_{i/n}\in A\}\Big),\\
  S_{n,t}(A) &:= \frac{\sum_{i\in I_{n,t}}P\{R_{i/n}>u_{n,t},\Theta_{i/n}\in A\}}{\sum_{i\in I_{n,t}} P\{R_{i/n}>u_{n,t}\}}.
\end{align*}
Note that  $S_{n,t}(A)=w_{n,t}(A)/k_n$ for sufficiently large $n$. Moreover,   for $ t\in[0,1], A\in\AA$, let
\begin{align*}
 Y_{n,i}(t,A) & := (k_nh_n)^{-1/2} \big[ D_{n,i}(A) - S_{n,r(i)}(A) D_{n,i}\big]\times \int_{r(i)-h_n}^{r(i)+h_n} \indd{[0,t]}(r)\, dr
\end{align*}
with
\begin{align*}
  r(i) & := \bigg( 2\ceill{\frac{i}{2nh_n}}-1\bigg) h_n,\\
  D_{n,i}(A) & := \Indd{R_{i/n}>u_{n,r(i)},\Theta_{i/n}\in A}-P\{R_{i/n}>u_{n,r(i)},\Theta_{i/n}\in A\},\\
  D_{n,i} & := D_{n,i}(\sphere) = \Indd{R_{i/n}>u_{n,r(i)}}-P\{R_{i/n}>u_{n,r(i)}\}.
\end{align*}
\begin{proposition} \label{cor:intprocapprox}
    If the Conditions  (US), (B), (IS) and (R) are met, then the process $\widetilde{IZ}_{n,\cdot}$ defined in Proposition \ref{prop:IZtilde} fulfills
\begin{align*}
    \sup_{t\in[0,1],A\in\AA}\bigg|\widetilde{IZ}_{n,t}(A)-\sum_{i=1}^n Y_{n,i}(t,A)\Big| =o_P(1).
\end{align*}
\end{proposition}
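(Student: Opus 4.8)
The plan is to obtain $\sum_{i=1}^n Y_{n,i}(t,A)$ from $\widetilde{IZ}_{n,t}(A)$ by a short chain of approximations, each of which removes one nonlinear or random ingredient from the definition of $\widetilde{IZ}_{n,t}$ in Proposition 2.2: (i) replace the random local normalizations — the numbers of exceedances in the sub-windows — by their deterministic analogues $w_{n,r}(\sphere)\sim k_n$; (ii) linearize the resulting ratio to first order; and (iii) replace the deterministic tail functionals $S_{n,r}(A)$ by the limit $S_t(A)$, using the uniform regular variation and smoothness built into (US), (IS), (B) and (R). Each of the three errors will be shown to be $o_P(1)$ uniformly over $(t,A)\in[0,1]\times\AA$. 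The uniformity over $A$, together with the quantitative control of the deviation of the exceedance counts from their means, is supplied by Lemma \ref{lemma:bernstein}; (A$^*$) enters only through that lemma.

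First I would fix a good event. Applying Lemma \ref{lemma:bernstein} with a constant $d$ chosen so large that $d^2/50-\VC$ exceeds any exponent needed below, and taking a union bound over a grid $\{j/N_n:0\le j\le N_n\}$ with spacing $o(h_n)$ (so $N_n$ of order $h_n^{-1}\log k_n$, say), one gets — since $\log h_n=o(k_n)$ and (R) force $h_n/k_n$ to be small enough that $N_n\,K(h_n/k_n)^{d^2/50-\VC}\to0$ — that with probability tending to $1$,
\begin{equation}\label{eq:planA}
 \sup_{A\in\AA}\Big|\sum_{i\in I_{n,r}}\Indd{R_{i/n}>u_{n,r},\Theta_{i/n}\in A}-w_{n,r}(A)\Big|\le d\big(k_n\log(k_n/h_n)\big)^{1/2}
\end{equation}
holds for every grid point $r$. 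Passing from the grid to all $r\in[0,1]$ uses that sliding a window by $1/n$ changes only finitely many summands and that the kernel in $\widetilde{IZ}_{n,t}$ has bounded variation, so the oscillation of both sides of \eqref{eq:planA} between consecutive grid points is of smaller order than its right-hand side; in particular all sub-window exceedance counts then lie between $k_n/2$ and $3k_n/2$, the analogue of the event $D_n$ in the proof of Lemma \ref{lemma:bernstein}.

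On that event I would linearize. By \eqref{eq:planA} every denominator occurring in $\widetilde{IZ}_{n,t}(A)$ equals $k_n\big(1+O\big((\log(k_n/h_n)/k_n)^{1/2}\big)\big)$ uniformly, while the corresponding numerator processes are $O_P(1)$ uniformly in $(t,A)$ — here one uses the asymptotic equicontinuity already established for $\widetilde{IZ}_{n,\cdot}$ in Proposition 2.2 by the same VC/entropy argument as in Proposition \ref{prop:fixedt_procconv}, now also uniformly in $t$. Expanding $1/(\text{denominator})$ to first order therefore produces exactly $\sum_{i=1}^n Y_{n,i}(t,A)$, plus a remainder that is a tight quantity times $(\log(k_n/h_n)/k_n)^{1/2}=o(1)$. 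The remaining purely deterministic mismatch is $k_n^{1/2}\big(S_{n,r}(A)-S_t(A)\big)$ integrated against the kernel, which is $o(1)$ uniformly by (US), (IS), (B) and (R), exactly as $S_{n,t}(s,A)=S_t(A)+o(k_n^{-1/2})$ was used in the proof of Theorem 2.1.

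The main obstacle is the uniformity in the continuous time parameter $t$: Lemma \ref{lemma:bernstein} is only pointwise in $r$, so everything hinges on choosing the grid $N_n$ simultaneously fine enough that the oscillation of $\widetilde{IZ}_{n,\cdot}$ and of $\sum_i Y_{n,i}(\cdot,A)$ over an interval of length $1/N_n$ is genuinely negligible — which needs quantitative continuity in $t$ of both processes, coming from the regularity of the kernel and the slow variation in $t$ of $u_{n,t}$, $w_{n,t}$ and $S_t$ postulated in (US) and (IS) — and coarse enough that $N_n$ times the (for large $d$ super-polynomially small in $k_n/h_n$) bound of Lemma \ref{lemma:bernstein} still vanishes, which is where the rate restrictions (R) and $\log h_n=o(k_n)$ are needed. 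Once the good event \eqref{eq:planA} is available uniformly in $(t,A)$, steps (ii) and (iii) are routine and mirror the ratio expansion and bias estimate already carried out for Theorem 2.1.
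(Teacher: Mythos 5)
Your decomposition into (i) removing the randomness of the local normalization, (ii) linearizing the ratio, and (iii) replacing the pre-asymptotic $S_{n,r}(A)$ by its limit matches the paper's splitting of $\widetilde{IS}_{n,t}(A)-IS_{t}(A)$ into three integrals, and your treatment of the two deterministic bias terms via (US), (B), (IS) and (R) is sound. Two remarks on the setup: no grid finer than the block structure is needed, because the integrand is constant on each interval $\big((2j-2)h_n,2jh_n\big]$, so the supremum over $t$ reduces exactly to a maximum over the $J_n\approx 1/(2h_n)$ block indices $r_j$ (this is the event $B_n$ in the paper, a union bound over $J_n$ points only); and you should not appeal to tightness or asymptotic equicontinuity of $\widetilde{IZ}_{n,\cdot}$ "established in Proposition 2.2'', since the present proposition is a step in the proof of that result — the argument would be circular.

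The genuine gap is in step (ii). The error made in block $j$ by replacing $N_{r_j}^{-1}$ with $k_n^{-1}$ is $\Dt_{n,r_j}(A)=\big(N_{r_j}^{-1}-k_n^{-1}\big)\big(\sum_{i\in I_{n,r_j}}\Indd{R_{i/n}>u_{n,r_j},\Theta_{i/n}\in A}-N_{r_j}S_{n,r_j}(A)\big)$, and Lemma \ref{lemma:bernstein} indeed gives $|\Dt_{n,r_j}(A)|=O\big(k_n^{-1}\log(k_n/h_n)\big)$ uniformly, i.e.\ a per-block relative error of order $(\log(k_n/h_n)/k_n)^{1/2}$ as you claim. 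But the quantity that must vanish is $(k_nh_n)^{1/2}\sup_{J,A}\big|\sum_{j=1}^{J}\Dt_{n,r_j}(A)\big|$ with up to $J_n\sim h_n^{-1}$ summands; the triangle inequality only yields $O\big((k_nh_n)^{-1/2}\log(k_n/h_n)\big)$, which \emph{diverges} because (R) forces $k_nh_n\to 0$. Nor can you salvage this by writing the error as "a tight quantity times $(\log(k_n/h_n)/k_n)^{1/2}$'': the factor multiplying $\max_j|k_n/N_{r_j}-1|$ is a sum of \emph{absolute values} of the block contributions, which is not tight. What is missing is cancellation of the linearization errors \emph{across} blocks. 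The paper obtains it by conditioning on the radial exceedance indicators $\RR$: given $\RR$, the $\Dt_{n,r_j}(A)$ are independent over $j$, with conditional mean $(1-N_{r_j}/k_n)\,O(q_n)$ (by (US)) and conditional variance at most $N_{r_j}^{-1}(1-N_{r_j}/k_n)^2=O\big(k_n^{-2}\log(k_n/h_n)\big)$, so that the conditional variance of $(k_nh_n)^{1/2}\sum_{j\le J}\Dt_{n,r_j}(A)$ is $O\big(k_n^{-1}\log(k_n/h_n)\big)\to 0$; uniformity in $(J,A)$ then needs a second symmetrization--Sauer--Bernstein argument (which is also where (A$^*$) enters a second time, beyond Lemma \ref{lemma:bernstein}). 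Without this conditional second-moment step your proof does not close.
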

The proof can be found in the Supplement \cite{Drees22}.
\smallskip

\begin{proofof} Proposition \ref{prop:IZtilde}. \rm
   As usual, we first prove convergence of the fidis.
   By Proposition \ref{cor:intprocapprox}, the Cram\'{e}r-Wold device and the CLT of Lindeberg-Feller, we have to show that the covariances of $Y_{n,i}$  converge and the Lindeberg condition is fulfilled. Using  Conditions (US) and (B), one can easily show that $S_{n,r(i)}(A)= S_{r(i)}(A)+o\big((h_n/k_n)^{1/2}\big)$ (cf.\ (8.8) in \cite{Drees22}).
   From $\Cov(D_{n,i}(A),D_{n,i}(B))=P\{R_{i/n}>u_{n,r(i)}, \Theta_{i/n}\in A\cap B\}+o(P\{R_{i/n}>u_{n,r(i)}\})$ and $r(i)=r_j$ for $i\in I_{n,r_j}$, it follows for $0\le s\le t\le 1$ and $A,B\in\AA$
   \begin{align*}
     & \sum_{i=1}^n  \Cov\big(Y_{n,i}(s,A),Y_{n,i}(t,B)\big)\\
     & = \frac 1{k_nh_n} \sum_{i=1}^n \Big[ \Cov\big(D_{n,i}(A),D_{n,i}(B)\big)-S_{n,r(i)}(A) \Cov\big(D_{n,i}(B),D_{n,i}\big)\\
     & \hspace{2cm} -S_{n,r(i)}(B) \Cov\big(D_{n,i}(A),D_{n,i}\big) +S_{n,r(i)}(A)S_{n,r(i)}(B) \Var(D_{n,i})\Big]\\
     & \hspace{2cm} \times \int_{r(i)-h_n}^{r(i)+h_n} \indd{[0,s]}(r)\,dr \times \int_{r(i)-h_n}^{r(i)+h_n} \indd{[0,t]}(r)\,dr\\
      & = \frac 1{k_nh_n} \sum_{i=1}^n \Big[ P\{R_{i/n}>u_{n,r(i)}, \Theta_{i/n}\in A\cap B\} -S_{n,r(i)}(A) P\{R_{i/n}>u_{n,r(i)}, \Theta_{i/n}\in B\}\\
     & \hspace{1cm} -S_{n,r(i)}(B) P\{R_{i/n}>u_{n,r(i)}, \Theta_{i/n}\in A\} +S_{n,r(i)}(A)S_{n,r(i)}(B) P\{R_{i/n}>u_{n,r(i)}\}\\
     & \hspace{1cm} +o(P\{R_{i/n}>u_{n,r(i)}\})\Big]\times \int_{r(i)-h_n}^{r(i)+h_n} \indd{[0,s]}(r)\,dr \times \int_{r(i)-h_n}^{r(i)+h_n} \indd{[0,t]}(r)\,dr\\
      & = \frac 1{h_n} \sum_{j=1}^{J_n+1}\big[S_{n,r_j}(A\cap B)-S_{n,r_j}(A) S_{n,r_j}(B)+o(1)\big]\\
       & \hspace*{4cm} \times \int_{r_j-h_n}^{r_j+h_n} \indd{[0,s]}(r)\,dr \times \int_{r_j-h_n}^{r_j+h_n} \indd{[0,t]}(r)\,dr\\
      & = 2 \sum_{j=1}^{J_n+1}\big[S_{r_j}(A\cap B)-S_{r_j}(A) S_{r_j}(B)\big] \int_{r_j-h_n}^{r_j+h_n} \indd{[0,s]}(r)\,dr +o(1)\\
      & \to 2\int_0^s S_r(A\cap B)-S_r(A)S_r(B)\, dr.
     \end{align*}
     In the last step we have used Condition (IS). The Lindeberg condition is trivial, because $Y_{n,i}(s,A)$ is bounded by $2(h_n/k_n)^{1/2}\to 0$.

   It remains to show that $\sum_{i=1}^n Y_{n,i}$ is asymptotically equicontinuous. To this end, we apply Theorem 2.11.1 of \cite{vdVW96} to the uncentered processes
   \begin{align*}
   &Y_{n,i}^*(t,A) \\
   &= (k_nh_n)^{-1/2} \big(\Indd{R_{i/n}>u_{n,r(i)}, \Theta_{i/n}\in A}-S_{n,r(i)}(A)\Indd{R_{i/n}>u_{n,r(i)}}\big) \int_{r(i)-h_n}^{r(i)+h_n} \indd{[0,t]}(r)\,dr.
   \end{align*}
   As semi-metric on $\FF:=[0,1]\times\AA$ we choose $\rho\big((s,A),(t,B)\big):=|t-s|+\rho_I(A,B)$.
   In view of Condition (A$^*$), $\FF$ is obviously totally bounded w.r.t.\ $\rho$ and the measurability condition and the Lindeberg type condition are fulfilled, too.

   The second displayed condition of Theorem 2.11.1 is fulfilled if the following two conditions are met:
   \begin{align}
     \lim_{\delta\downarrow 0} \limsup_{n\to \infty} \sup_{|s-t|\le \delta,A\in\AA} \sum_{i=1}^n E\big(Y_{n,i}^*(s,A)-Y_{n,i}^*(t,A)\big)^2 & = 0, \label{eq:cond1}\\
     \lim_{\delta\downarrow 0} \limsup_{n\to \infty} \sup_{\rho_I(A,B)\le\delta, t\in[0,1]} \sum_{i=1}^n E\big(Y_{n,i}^*(t,A)-Y_{n,i}^*(t,B)\big)^2 & = 0  \label{eq:cond2}.
   \end{align}
   W.l.o.g.\ assume $s\le t$. Condition \eqref{eq:cond1} follows from
   \begin{align*}
     & \sum_{i=1}^n  E\big(Y_{n,i}^*(s,A)-Y_{n,i}^*(t,A)\big)^2\\
     & = \frac 1{k_nh_n} \sum_{i=1}^n E\big(\Indd{R_{i/n}>u_{n,r(i)}, \Theta_{i/n}\in A}-S_{n,r(i)}(A)\Indd{R_{i/n}>u_{n,r(i)}}\big)^2 \Big(\int_{r(i)-h_n}^{r(i)+h_n} \indd{(s,t]}(r)\, dr\Big)^2\\
     & \le \frac 2{k_n} \sum_{j=1}^{J_n+1} \sum_{i\in I_{n,r_j}} P\{R_{i/n}>u_{n,r_j}\}\int_{2(j-1)h_n}^{2jh_n} \indd{(s,t]}(r)\, dr\\
     & = 2 \sum_{j=1}^{J_n+1} \int_{2(j-1)h_n}^{2jh_n} \indd{(s,t]}(r)\, dr
      = 2|t-s|.
   \end{align*}

   To verify \eqref{eq:cond2}, we again use $S_{n,r}(A)=S_r(A)+o(1)$ (which follows from (US), (B) and (R)) and (IS) to obtain, uniformly for all $A,B\in\AA$ and $t\in[0,1]$,
   \begin{align*}
     \sum_{i=1}^n & E\big(Y_{n,i}^*(t,A)-Y_{n,i}^*(t,B)\big)^2\\
     & \le \frac{4h_n}{k_n}  \sum_{i=1}^n E\Big[\Indd{R_{i/n}>u_{n,r(i)}, \Theta_{i/n}\in A} - \Indd{R_{i/n}>u_{n,r(i)}, \Theta_{i/n}\in B}\\
     & \hspace*{3cm} -(S_{n,r(i)}(A)-S_{n,r(i)} (B))\Indd{R_{i/n}>u_{n,r(i)}}\Big]^2\\
     & \le  \frac{8h_n}{k_n} \bigg[ \sum_{i=1}^n  P\{R_{i/n}>u_{n,r(i)},\Theta_{i/n}\in A\triangle B\}\\
      & \hspace{4cm} +  \sum_{i=1}^n \big(S^2_{r(i)}(A\triangle B)+o(1)\big)P\{R_{i/n}>u_{n,r(i)}\}\bigg]\\
     & \le 16h_n \sum_{j=1}^{J_n+1} S_{r_j}(A\triangle B)+o(1)\\
     & \to 8\int_0^1 S_r(A\triangle B)\, dr = 8\rho_I(A,B).
   \end{align*}
    Now \eqref{eq:cond2} is obvious and it remains to establish the entropy condition in Theorem 2.11.1 of \cite{vdVW96}. Define a random semi-metric $d_n$ on $\FF$ by
   \begin{align*}
     & d_n^2  \big((s,A),(t,B)\big)\\
     & = \sum_{i=1}^n \big(Y_{n,i}^*(s,A)-Y_{n,i}^*(t,B)\big)^2\\
 & \le \frac 2{k_nh_n} \sum_{i=1}^n \big( \Indd{R_{i/n}>u_{n,r(i)}, \Theta_{i/n}\in A}-S_{n,r(i)}(A)\Indd{R_{i/n}>u_{n,r(i)}}\big)^2 \Big(\int_{r(i)-h_n}^{r(i)+h_n} \indd{(s,t]}(r)\, dr\Big)^2\\
     & \hspace{1cm}+ \frac 2{k_nh_n} \sum_{i=1}^n \big( \Indd{R_{i/n}>u_{n,r(i)}, \Theta_{i/n}\in A} -\Indd{R_{i/n}>u_{n,r(i)}, \Theta_{i/n}\in B}\\
     &  \hspace{3cm} -(S_{n,r(i)}(A)-S_{n,r(i)}(B))\Indd{R_{i/n}>u_{n,r(i)}}\big)^2 \Big(\int_{r(i)-h_n}^{r(i)+h_n} \indd{[0,t]}(r)\, dr\Big)^2\\
     &  \le \frac 4{k_n} \sum_{i=1}^n \Indd{R_{i/n}>u_{n,r(i)}}\int_{r(i)-h_n}^{r(i)+h_n} \indd{(s,t]}(r)\, dr\\
     & \hspace{1cm}+ \frac {16h_n}{k_n} \sum_{i=1}^n \big(\Indd{R_{i/n}>u_{n,r(i)},\Theta_{i/n}\in A}-\Indd{R_{i/n}>u_{n,r(i)},\Theta_{i/n}\in B}\big)^2\\
     & \hspace{1cm}+ \frac {16h_n}{k_n} \sum_{i=1}^n \big( S_{n,r(i)}(A)-S_{n,r(i)}(B)\big)^2 \Indd{R_{i/n}>u_{n,r(i)}}\\
     & =: 4 d_{n,1}^2(s,t) + 16 d_{n,2}^2(A,B) + 16 d_{n,3}^2(A,B).
   \end{align*}

   Denote the uniform distribution on an interval $(a,b]$ by $\UU_{(a,b]}$. Define a random probability measure $Q_1$ on $[0,1]$ by
   $$ Q_1 := \frac{\sum_{j=1}^{J_n+1} \sum_{i\in I_{n,r_j}} \Indd{R_{i/n}>u_{n,r_j}} \UU_{(r_j-h_n,r_j+h_n]}}{\sum_{j=1}^{J_n+1} \sum_{i\in I_{n,r_j}} \Indd{R_{i/n}>u_{n,r_j}}}
   $$
   where we assume w.l.o.g.\ that the denominator is positive. (Else $d_{n,1}\equiv 0$ and the corresponding covering number equals 1.) Then the $L_1(Q_1)$-distance between $\indd{[0,s]}$ and $\indd{[0,t]}$ equals
   \begin{align*}
      \| \indd{[0,s]}  -\indd{[0,t]}\|_{L_1(Q_1)}
      & =  \frac{\sum_{i=1}^{n} \Indd{R_{i/n}>u_{n,r(i)}} \int_{r(i)-h_n}^{r(i)+h_n} \indd{(s,t]}(r)\, dr}{2h_n\sum_{i=1}^{n}  \Indd{R_{i/n}>u_{n,r(i)}}}.
   \end{align*}
   According to Lemma \ref{lemma:bernstein}, the denominator is bounded by $2 h_n (J_n+1)(3/2)k_n\le 2k_n$ with probability $1-O((h_n/k_n)^\tau J_n)$ for all $\tau>0$. Thus, with this probability,
   $$ d_{n,1}^2(s,t) \le  2 \| \indd{[0,s]}  -\indd{[0,t]}\|_{L_1(Q_1)}, $$
   which in turn implies the following inequality for covering numbers w.r.t.\ $d_{n,1}$ and $L_1(Q_1)$, respectively:
   $$ N\big(\eta,(\indd{[0,t]})_{t\in[0,1]},d_{n,1}\big) \le N\big(\eta^2/2,(\indd{[0,t]})_{t\in[0,1]},L_1(Q_1)\big)\le 2L \eta^{-2}
   $$
   for some universal constant $L$. The last inequality follows from Theorem 2.6.4 of \cite{vdVW96} and the fact that the family $(\indd{[0,t]})_{t\in[0,1]}$ has VC-index 2.

   Similarly, for
   $$ Q_2 := \frac{\sum_{i=1}^{n} \Indd{R_{i/n}>u_{n,r(i)}} \eps_{\Theta_{i/n}}}{\sum_{i=1}^{n} \Indd{R_{i/n}>u_{n,r(i)}}}
   $$
   (with $\eps_\theta$ denoting the Dirac measure with point mass at $\theta$) one has
   $$ d_{n,2}^2(A,B) = \frac{h_n}{k_n}\sum_{i=1}^{n} \Indd{R_{i/n}>u_{n,r(i)}}\|1_A-1_B\|_{L_1(Q_2)} \le \|1_A-1_B\|_{L_1(Q_2)}
   $$
   with probability $1-O((h_n/k_n)^\tau J_n)$ and hence
   $$ N(\eta,\AA,d_{n,2}) \le N(\eta^2,\AA,L_1(Q_2))\le L \eta^{-2(\VC-1)}. $$

   Next define a probability measure
   $$ Q_3 := \frac 1{J_n+1} \sum_{j=1}^{J_n+1}\frac{\sum_{i\in I_{n,r_j}} P\{R_{i/n}>u_{n,r_j}\} P^{\Theta_{i/n}\mid R_{i/n}>u_{n,r_j}}}{\sum_{i\in I_{n,r_j}} P\{R_{i/n}>u_{n,r_j}\}}.
   $$
   Since $\sum_{i\in I_{n,r_j}} \Indd{R_{i/n}>u_{n,r_j}}\le 2k_n$ for all $1\le j\le J_n$ with probability $1-O((h_n/k_n)^\tau J_n)$, one obtains
   \begin{align*}
     & d_{n,3}^2(A,B)\\
      & \le  \frac{h_n}{k_n} \sum_{j=1}^{J_n+1} |S_{n,r_j}(A)-S_{n,r_j}(B)|\sum_{i\in I_{n,r_j}} \Indd{R_{i/n}>u_{n,r_j}}\\
      & \le \frac{h_n}{k_n} 2k_n \sum_{j=1}^{J_n+1}\bigg| \frac{\sum_{i\in I_{n,r_j}} \big(P\{R_{i/n}>u_{n,r_j},\Theta_{i/n}\in A\}-P\{R_{i/n}>u_{n,r_j},\Theta_{i/n}\in B\}\big)}{\sum_{i\in I_{n,r_j}} P\{R_{i/n}>u_{n,r_j}\}}\bigg|\\
     &  \le  2h_n(J_n+1)  \|\indd{A}-\indd{B}\|_{L_1(Q_3)} \le 2 \|\indd{A}-\indd{B}\|_{L_1(Q_3)}.
   \end{align*}
   As above, we may conclude $P\big\{N\big(\eta,\AA,d_{n,3}\big) \le L  (\eta/2)^{-2(\VC-1)}\big\}=1-O((h_n/k_n)^\tau J_n)$.

   A combination of the bounds on the three covering numbers yields
   \begin{align*}
    N(\eta,\FF,d_n) & \le N\big(\eta/4,(\indd{[0,t]})_{t\in[0,1]},d_{n,1}\big) \cdot N(\eta/8,\AA,d_{n,2}) \cdot N\big(\eta/8,\AA,d_{n,3}\big)
     \le M \eta^{2-4\VC}
   \end{align*}
   for some constant $M$ with probability tending to 1. Now the entropy condition of Theorem 2.11.1 is immediate, which concludes the proof.
\end{proofof}

Check that
\begin{align*}
  & \widetilde{IZ}_{n,t}(A)-\widehat{IZ}_{n,t}(A)\\
   & = 2 (k_nh_n)^{1/2} \sum_{j=1}^{\floor{t/(2h_n)}} \Delta_{n,j}(A) + \Big(\frac{k_n}{h_n}\Big)^{1/2}\Big(t-2h_n \floorr{\frac{t}{2h_n}}\Big) \Delta_{n,\floor{t/(2h_n)}+1}(A),
  %\\   & = o_P(1),
\end{align*}
where $\Delta_{n,j}(A) := \hat S_{n,r_j}(A)-\tilde S_{n,r_j}(A)$.
Since $k_nh_n\to 0$ by Condition (R), to conclude our main result Theorem \ref{theo:IZhat}, we have to show  that
\begin{align} \label{eq:diffIZhIZt}
  \sup_{1\le J\le J_n,A\in\AA} \bigg|\sum_{j=1}^{J}\Delta_{n,j}(A)\bigg| = o_P\big((k_nh_n)^{-1/2}\big).
\end{align}

 To this end, we first bound the probability that certain order statistics of the norm of observed vectors in the $j$th block substantially deviate from the deterministic bounds $u_{n,r_j}$.
\begin{lemma} \label{lemma:orderstatapprox}
  For all $i,k\in I_{n,r_j}$, $i\ne k$, denote the $k_n$th largest order statistic among $R_{l/n}$, $l\in I_{n,r_j}\setminus\{i\}$, by $u_{n,j}^{(i)}$, and the $(k_n-1)$th largest order statistic among $R_{l/n}$, $l\in I_{n,r_j}\setminus\{i,k\}$, by $u_{n,j}^{(i,k)}$. Then, under Condition (L), there exists a constant $c$, depending only on $\eta$, such that
  \begin{align}
    \sup_{1\le j\le J_n} P\Big\{ \Big|\frac{u_{n,j}^{(i)}}{u_{n,r_j}}-1\Big|>\eps \text{ for some } i\in I_{n,r_j}\Big\} & = o\big( \exp(-ck_n)\big) \label{eq:orderstatapprox1}\\
    \sup_{1\le j\le J_n} P\Big\{ \Big|\frac{u_{n,j}^{(i,k)}}{u_{n,r_j}}-1\Big|>\eps \text{ for some } i,k\in I_{n,r_j}\Big\} & = o\big( \exp(-ck_n)\big). \label{eq:orderstatapprox2}
  \end{align}
\end{lemma}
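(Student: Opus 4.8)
The plan is to reduce both displays to ordinary large-deviation bounds for the \emph{full}-block exceedance counts and then to invoke Bernstein's inequality exactly as in the proof of Lemma~\ref{lemma:bernstein}. For $\beta\in\{1-\eps,1+\eps\}$ put
\[
  N_{n,r_j}^{\beta}:=\sum_{l\in I_{n,r_j}}\Indd{R_{l/n}>\beta u_{n,r_j}}.
\]
Since $\unji$ is the $k_n$th largest of the $|I_{n,r_j}|-1$ values $R_{l/n}$, $l\in I_{n,r_j}\setminus\{i\}$, the event $\unji>(1+\eps)u_{n,r_j}$ forces at least $k_n$ of these values to exceed $(1+\eps)u_{n,r_j}$, whence $N_{n,r_j}^{1+\eps}\ge k_n$; likewise $\unji<(1-\eps)u_{n,r_j}$ forces at most $k_n-1$ of them to exceed $(1-\eps)u_{n,r_j}$, so at most $k_n$ of all $l\in I_{n,r_j}$ do, i.e.\ $N_{n,r_j}^{1-\eps}\le k_n$. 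As these inclusions hold for every $i$,
\[
  \Big\{\big|\unji/u_{n,r_j}-1\big|>\eps\ \text{for some}\ i\in I_{n,r_j}\Big\}\subseteq\{N_{n,r_j}^{1+\eps}\ge k_n\}\cup\{N_{n,r_j}^{1-\eps}\le k_n\},
\]
and the same reasoning applied to the $(k_n-1)$th largest of $|I_{n,r_j}|-2$ values shows that the event in \eqref{eq:orderstatapprox2} is contained in $\{N_{n,r_j}^{1+\eps}\ge k_n-1\}\cup\{N_{n,r_j}^{1-\eps}\le k_n\}$.

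It thus suffices to bound the probabilities of these deviation events uniformly in $j\le J_n$. By (RV1), $EN_{n,r_j}^{\beta}=\nu_{n,r_j}(\beta u_{n,r_j},\infty)=\beta^{-\alpha_{r_j}}k_n(1+o(1))$, and (L) furnishes constants $0<\underline\alpha\le\bar\alpha<\infty$ with $\alpha_{r_j}\in[\underline\alpha,\bar\alpha]$ for all $j$, together with uniformity of the $o(1)$ term in $j$. Hence, for a constant $\lambda\in(0,1)$ depending only on $\eps$ and $\eta$, eventually $EN_{n,r_j}^{1+\eps}\le\lambda k_n$ and $EN_{n,r_j}^{1-\eps}\ge\lambda^{-1}k_n$ for all $j\le J_n$, while $\Var(N_{n,r_j}^{\beta})\le EN_{n,r_j}^{\beta}=O(k_n)$ uniformly in $j$. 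Consequently $\{N_{n,r_j}^{1+\eps}\ge k_n-1\}$ is a deviation of $N_{n,r_j}^{1+\eps}$ above its mean by at least $(1-\lambda)k_n-1$, and $\{N_{n,r_j}^{1-\eps}\le k_n\}$ a deviation of $N_{n,r_j}^{1-\eps}$ below its mean by at least $(\lambda^{-1}-1)k_n$; in both cases the deviation is of exact order $k_n$ whereas the variance is $O(k_n)$. Bernstein's inequality, applied as in \eqref{eq:bern}, then produces a constant $c>0$, depending only on $\eps$ and $\eta$, such that
\[
  \sup_{1\le j\le J_n}\Big(P\{N_{n,r_j}^{1+\eps}\ge k_n-1\}+P\{N_{n,r_j}^{1-\eps}\le k_n\}\Big)\le\exp(-2ck_n)
\]
for all large $n$, which together with the inclusions above yields \eqref{eq:orderstatapprox1} and \eqref{eq:orderstatapprox2}.

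The one substantive point is the elementary observation that, by regular variation, the exceedance counts at the shifted levels $(1\pm\eps)u_{n,r_j}$ have means $(1\pm\eps)^{-\alpha_{r_j}}k_n(1+o(1))$ bounded away from $k_n$, so that the events forcing $\unji$ or $\unjik$ far from $u_{n,r_j}$ are genuine large deviations of order $k_n$ and hence exponentially unlikely; everything else is the routine Bernstein estimate already carried out in Lemma~\ref{lemma:bernstein}, and the passage from a single block to $\sup_{j\le J_n}$ costs nothing once (L) makes the constants $\lambda$ and $c$ independent of $j$. I do not anticipate a real obstacle beyond checking that the uniformity in $j$ packaged into (RV1) and (L) is exactly what the Bernstein step requires.
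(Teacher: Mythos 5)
Your proposal is correct and follows essentially the same route as the paper: reduce the order-statistic events to deviations of the full-block exceedance counts at the levels $(1\pm\eps)u_{n,r_j}$, use Condition (L) to separate their means from $k_n$ by a fixed multiple $\tilde\eta k_n$ uniformly in $j$, and apply Bernstein's inequality. The only cosmetic difference is that the paper credits the mean separation directly to (L) rather than to (RV1), and writes out the two Bernstein exponents explicitly.
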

The proof, which uses standard arguments based on Bernstein's inequality, is given in the Supplement \cite{Drees22}.

Next we bound the first  two moments of $\Delta_{n,j}(A)$. In what follows, we use the abbreviation
$$ \Snt (A) := P(\Theta_t\in A \mid R_t>u_{n,t}). $$
\begin{lemma} \label{lemma:firstmoment}
  If the Conditions (US), (US$^*$) and (L) are met, then for some $c>0$
   \begin{equation*}
   \sup_{1\le j\le J_n,A\in\AA} |E \Delta_{n,j}(A)| = O(q_n+q_n^*)+ o\big(\exp(-ck_n)\big).
   \end{equation*}
 %  In particular, if in addition Condition (R) holds, then
 %  \begin{equation*}
 %  \sup_{1\le j\le J_n,A\in\AA} |E \Delta_{n,j}(A)| = o\Big(\Big(\frac{h_n}{k_n}\Big)^{1/2}\Big).
 %  \end{equation*}
\end{lemma}
\begin{proof}
   First note that, by Condition (US), one has uniformly for all $t\in[0,1]$
   \begin{align}
     E \tilde S_{n,t}(A) & = E\Big[ E\Big( N_t^{-1}\sum_{i\in I_{n,t}} \Indd{R_{i/n}>u_{n,t},\Theta_{i/n}\in A}\,\Big|\, \big(\Indd{R_{l/n}>u_{n,t}}\big)_{l\in I_{n,t}}\Big)\Big]  \nonumber\\
     & = E\Big[ N_t^{-1}\sum_{i\in I_{n,t}} \Indd{R_{i/n}>u_{n,t}} P(\Theta_{i/n}\in A\mid R_{i/n}>u_{n,t})\Big]  \nonumber\\
     & =     \Snt(A) + O(q_n). \label{eq:firstT1bound}
   \end{align}

   To approximate $E (\hat S_{n,r_j})$, note that for $i\in I_{n,r_j}$ the condition $R_{i/n}>\hat u_{n,r_j}$ is equivalent to $R_{i/n}>u_{n,j}^{(i)}$, with $u_{n,j}^{(i)}$ defined in Lemma \ref{lemma:orderstatapprox}. Thus, by \eqref{eq:orderstatapprox1},
   \begin{align*}
      k_n E(\hat S_{n,r_j}(A))%\\
      & = E\Big(  \sum_{i\in I_{n,r_j}} \Indd{R_{i/n}>\hat u_{n,r_j}, \Theta_{i/n}\in A} \Indd{|u_{n,j}^{(i)}/u_{n,r_j}-1|\le\eps}\Big)\\
      & \hspace{1cm} + O\bigg(E\Big(  \sum_{i\in I_{n,r_j}} \Indd{R_{i/n}>\hat u_{n,r_j}}
     \Indd{|u_{n,j}^{(i)}/u_{n,r_j}-1|>\eps}\Big)\bigg)\\
     & =
     \sum_{i\in I_{n,r_j}} E\Big(\Indd{R_{i/n}>u_{n,j}^{(i)}, \Theta_{i/n}\in A} \Indd{|u_{n,j}^{(i)}/u_{n,r_j}-1|\le\eps}\Big) + o\big(k_n\exp(-ck_n)\big).
   \end{align*}
   The expectation on the right hand side equals
   \begin{align*}
    \int & P\{R_{i/n}>u,\Theta_{i/n}\in A\} \indd{[(1-\eps)u_{n,r_j}, (1+\eps)u_{n,r_j}]}(u)\, P^{\unji}(du)\\
      & = \int \big(\Snrj(A)+O(q_n+q_n^*)\big)P\{R_{i/n}>u\} \indd{[(1-\eps)u_{n,r_j}, (1+\eps)u_{n,r_j}]}(u)\, P^{\unji}(du)\\
      & = \big(\Snrj(A)+O(q_n+q_n^*)\big)P\big\{R_{i/n}>\unji\in [(1-\eps)u_{n,r_j}, (1+\eps)u_{n,r_j}]\big\},
    \end{align*}
    where we have used the Conditions (US) and (US$^*$) in the first step.
    Hence, applying again \eqref{eq:orderstatapprox1}, we conclude
    \begin{align}
     k_n  E(\hat S_{n,r_j}(A)) & = \big(\Snrj(A)+O(q_n+q_n^*)\big) \times  \nonumber\\
      & \hspace{-1.5cm} \times \sum_{i\in I_{n,r_j}} P\{R_{i/n}>\hat u_{n,r_j}, \unji\in [(1-\eps)u_{n,r_j}, (1+\eps)u_{n,r_j}]\big\} + o\big(k_n  \exp(-ck_n)\big) \nonumber\\
     & = k_n \Snrj(A)+ O\big(k_n(q_n+q_n^*)\big) + o\big(k_n  \exp(-ck_n)\big)\label{eq:secondT1bound}
    \end{align}
    uniformly for all $1\le j\le J_n$ and $A\in\AA$.
    A combination of \eqref{eq:firstT1bound} and \eqref{eq:secondT1bound} yields
    the assertion.
\end{proof}

\begin{lemma} \label{lemma:secondmoment}
  If the Conditions (US), (US$^*$) and (L) are met and $\log h_n=o(k_n)$, then
   \begin{equation*}
   \sup_{1\le j\le J_n,A\in\AA} E \Delta_{n,j}^2(A) = O\big(q_n+q_n^*+\big(k_n^{-3}\log(k_n/h_n)\big)^{1/2}\big).
   \end{equation*}
 %  In particular, if in addition Condition (R) holds, then
 %  \begin{equation*}
 %  \sup_{1\le j\le J_n,A\in\AA} |E \Delta_{n,j}(A)| = o\Big(\Big(\frac{h_n}{k_n}\Big)^{1/2}\Big).
 %  \end{equation*}
\end{lemma}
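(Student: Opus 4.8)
\emph{Proof idea.}
The whole argument runs conditionally on the radial information $\RR:=(\Indd{R_{i/n}>u_{n,r(i)}})_{1\le i\le n}$, given which the spectral indicators $\Indd{R_{i/n}>u_{n,r_j},\Theta_{i/n}\in A}$, $i\in I_{n,r_j}$, are independent Bernoulli variables, exactly as in the proof of Proposition~\ref{cor:intprocapprox}. The discrepancy $\Delta_{n,j}(A)$ between $\widehat{IZ}$ and $\widetilde{IS}$ on the $j$th block uses order statistics of the $R_{i/n}$ as effective thresholds, which breaks this independence, so the first step is to remove the data-dependence. Let $G_n$ be the event on which $|\unji/u_{n,r_j}-1|\le\eps$ and $|\unjik/u_{n,r_j}-1|\le\eps$ for every $j\le J_n$ and all relevant $i,k$. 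By Lemma~\ref{lemma:orderstatapprox}, $P(G_n^c)=o(\exp(-ck_n))$; since $\Delta_{n,j}(A)$ is bounded (indicator sums normalised by a count of order $k_n$ on $G_n$), the contribution of $G_n^c$ to $E\Delta_{n,j}^2(A)$ is $o(\exp(-ck_n))$ and negligible. On $G_n$ I would replace $\unji$ and $\unjik$ by $u_{n,r_j}$; this only affects the finitely many ``boundary'' indices $i$ for which $R_{i/n}$ lies between a data-dependent threshold and $u_{n,r_j}$, and it restores conditional independence given $\RR$.

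With that reduction in place, write $E\Delta_{n,j}^2(A)=E\big[(E(\Delta_{n,j}(A)\mid\RR))^2\big]+E\big[\Var(\Delta_{n,j}(A)\mid\RR)\big]$ and treat the two terms separately. For the conditional mean: Condition (US) yields $P(\Theta_{i/n}\in A\mid R_{i/n}>u_{n,r(i)})=S_{n,r_j}(A)+O(q_n)$ for $i\in I_{n,r_j}$, so the first-order part of $E(\Delta_{n,j}(A)\mid\RR)$ cancels against the centering inside $\Delta_{n,j}$ and leaves $O(q_n)$; the remaining term --- which comes from the shift of the conditioning level away from $u_{n,r_j}$ and from the pairing of distinct indices $i\ne k$, and is the reason the leave-two-out statistic $\unjik$ is needed at all --- is controlled by the sharper Condition (US$^*$) and is $O(q_n^*)$. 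As $q_n,q_n^*=o(1)$, squaring gives $E[(E(\Delta_{n,j}(A)\mid\RR))^2]=O(q_n+q_n^*)$. For the conditional variance: conditional independence makes $\Var(\Delta_{n,j}(A)\mid\RR)$ a weighted sum of Bernoulli variances over the boundary indices, and on the high-probability event of Lemma~\ref{lemma:bernstein} one has $|N_{r_j}/k_n-1|=O\big(k_n^{-1/2}(\log(k_n/h_n))^{1/2}\big)$ uniformly in $j$, so there are $O\big((k_n\log(k_n/h_n))^{1/2}\big)$ boundary indices; with the factor of order $k_n^{-2}$ that the $k_n^{-1}$-normalisation of $\Delta_{n,j}$ contributes to a variance, this gives $\Var(\Delta_{n,j}(A)\mid\RR)=O\big(k_n^{-3/2}(\log(k_n/h_n))^{1/2}\big)$ on that event. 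On its complement --- of probability $\le(h_n/k_n)^\tau$ for $\tau$ as large as we like --- the trivial bound $\Var(\Delta_{n,j}(A)\mid\RR)=O(1)$ together with $\log h_n=o(k_n)$ makes the expected contribution negligible, so $E[\Var(\Delta_{n,j}(A)\mid\RR)]=O\big((k_n^{-3}\log(k_n/h_n))^{1/2}\big)$.

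Adding the two parts yields the assertion, uniformly in $j\le J_n$ and $A\in\AA$; uniformity in $A$ is automatic because the (US)/(US$^*$) bounds, the Bernstein bounds on $N_{r_j}$, and the conditional Bernoulli-variance bound all hold uniformly in $A$, so --- unlike the earlier proofs --- no VC/entropy argument is needed here. The step I expect to be delicate is the decoupling: bookkeeping the boundary indices produced by replacing the order-statistic thresholds $\unji,\unjik$ with $u_{n,r_j}$, and checking that the bias this creates is genuinely of order $q_n+q_n^*$ (which is exactly where Condition (US$^*$) and the leave-two-out construction of Lemma~\ref{lemma:orderstatapprox} enter in full force), rather than something larger.
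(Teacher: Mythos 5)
Your order-of-magnitude arithmetic is sound (the boundary set $E_j\triangle F_j$ between $\{R_{i/n}>\hat u_{n,r_j}\}$ and $\{R_{i/n}>u_{n,r_j}\}$ has $|N_{r_j}-k_n|=O((k_n\log(k_n/h_n))^{1/2})$ elements on the Bernstein event, and a sum of that many conditionally independent Bernoullis scaled by $k_n^{-1}$ has variance $O((k_n^{-3}\log(k_n/h_n))^{1/2})$), and you are right that no entropy argument is needed since the lemma bounds a supremum of expectations. The gap is in the conditioning scheme itself, and it is not a technicality but the precise difficulty the lemma addresses. Conditioning on $\RR=(\Indd{R_{i/n}>u_{n,r(i)}})_i$ does not make $\hat u_{n,r_j}$, the boundary set, or the thresholds $\unji,\unjik$ measurable, so neither $E(\Delta_{n,j}(A)\mid\RR)$ nor $\Var(\Delta_{n,j}(A)\mid\RR)$ has the "weighted sum over a known set of boundary indices" form your argument needs. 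If you instead condition on the full radial vector, the boundary set does become measurable, but the conditional law of $\Indd{\Theta_{i/n}\in A}$ is then $P(\Theta_{i/n}\in A\mid R_{i/n}=r)$, which Conditions (US) and (US$^*$) do not control — they only speak about $P(\Theta_{i/n}\in A\mid R_{i/n}>u)$. For exactly this reason the asserted first-order cancellation in the conditional mean — the step that reduces the bias from the trivial $O(k_n^{-1/2}(\log(k_n/h_n))^{1/2})$ (whose square already exceeds the target) down to $O(q_n+q_n^*)$ — is nowhere actually produced: it requires knowing that $\Theta_{i/n}$ for a \emph{boundary} index $i$, i.e.\ an $i$ whose radius falls in a random, data-dependent interval, still has conditional law $\Snrj(\cdot)+O(q_n+q_n^*)$, and that is not a consequence of the stated conditions under your conditioning.

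The paper's proof is built precisely to avoid this. It expands $E\Delta_{n,j}^2(A)=E\hat S_{n,r_j}^2(A)+E\tilde S_{n,r_j}^2(A)-2E(\hat S_{n,r_j}(A)\tilde S_{n,r_j}(A))$ and evaluates each term to accuracy $O(q_n+q_n^*+(k_n^{-3}\log(k_n/h_n))^{1/2})$ (only a lower bound is needed for the cross term, which carries a negative sign); all three equal $(\Snrj(A))^2+k_n^{-1}\Snrj(A)(1-\Snrj(A))$ to that accuracy, so the leading terms cancel identically. The events $\{R_{i/n}>\hat u_{n,r_j}\}$ and $\{R_{i/n}\wedge R_{k/n}>\hat u_{n,r_j}\}$ are first rewritten with the leave-one-out and leave-two-out order statistics $\unji$ and $\unjik$, and one then conditions on $\FF_{n,i,k}$, the exceedance indicators of the \emph{other} observations only. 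Given $\FF_{n,i,k}$ the threshold is independent of observation $i$, so the conditional probability is an integral of $P\{R_{i/n}>u,\Theta_{i/n}\in A\}$ against the conditional law of $\unji$ on $[(1-\eps)u_{n,r_j},(1+\eps)u_{n,r_j}]$ — a genuine threshold-exceedance probability to which (US) and (US$^*$) apply; Lemma 7.3 controls the exceptional event. If you want to retain a conditional mean/variance decomposition, it has to be rebuilt around these partial conditionings; as written, the decoupling step on which your proof rests does not go through.
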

The proof, given in the Supplement \cite{Drees22}, resembles the one of Lemma \ref{lemma:firstmoment}. It is, though, substantially more involved, because in the mixed terms $E(\hat S_{n,r_j}(A)\tilde S_{n,t}(A))$ exceedances over random thresholds and exceedances over deterministic threshold occur jointly.

While Theorem 2.11.1 of \cite{vdVW96} cannot be applied directly to prove that $\sum_{j=1}^J \Delta_{n,j}(A)$ is uniformly negligible (i.e.\ that \eqref{eq:diffIZhIZt} holds), ideas from its proof turn out to be useful.
\begin{proposition} \label{prop:sumdeltasqconv}
  If the Conditions (US), (US$^*$), (A$^*$), (R) and (L) are met, then
  \begin{equation*}
   k_nh_n\log(k_n/h_n)E\bigg(\sup_{A\in\AA}\sum_{j=1}^{J_n} \Delta_{n,j}^2(A)\bigg)\to 0.
  \end{equation*}
\end{proposition}
\begin{proof}
  By Lemma \ref{lemma:secondmoment} and Condition (R), we have
  \begin{align*}
   k_nh_n\log\frac{k_n}{h_n}\sup_{ A\in\AA}\sum_{j=1}^{J_n} E\Delta_{n,j}^2(A)
   &  = O\Big(k_n\log\frac{k_n}{h_n}(q_n+q_n^*) +\Big(k_n^{-1}\log^3\frac{k_n}{h_n}\Big)^{1/2}\Big)  = o(1)
  \end{align*}
  and so
  \begin{equation*}
    k_nh_n\log\frac{k_n}{h_n} E\sup_{A\in\AA}\sum_{j=1}^{J_n} \Delta_{n,j}^2(A) = k_nh_n\log\frac{k_n}{h_n} E\sup_{A\in\AA}\bigg|\sum_{j=1}^{J_n} \big(\Delta_{n,j}^2(A)-E\Delta_{n,j}^2(A)\big)\bigg|+o(1).
  \end{equation*}
  By Lemma 2.3.6 of \cite{vdVW96},  the expectation on the right hand side can be bounded by
  $ 2E\sup_{A\in\AA}\Big|\sum_{j=1}^{J_n} \xi_j \Delta_{n,j}^2(A)\Big|, $
  with $\xi_j$ denoting iid Rademacher random variables, independent of $\XX=(\Indd{R_{i/n}>u_{n,r(i)}},\Theta_{i/n})_{1\le i\le n}$.

  Recall from Lemma \ref{lemma:bernstein} %(or \eqref{eq:Nrdiffbound})
   that for a sufficiently large constant $M$ and $n\in\N$ the set
  \begin{equation} \label{eq:Bndef}
  B_n:=\{|N_{r_j}-k_n|\le M(k_n\log(k_n/h_n))^{1/2} \text{ for all }1\le j\le J_n\}
   \end{equation}
   has probability at least $1-(h_n/k_n)^2$. Direct calculations show that $|\Delta_{n,j}(A)|\le|N_{r_j}-k_n|/N_{r_j}$ which is hence eventually bounded by $2M(k_n/\log(k_n/h_n))^{-1/2}$ on the set $B_n$. Moreover, on this event, there are $O(k_n/h_n)$ indices $i\in\{1,\ldots,n\}$ with $R_{i/n}>u_{n,r(i)}\wedge \hat u_{n,r(i)}$. Thus by Condition (A$^*$) and Sauer's lemma (\cite{vdVW96}, Cor.\ 2.6.3), for fixed $\XX$ and fixed Rademacher variables $\xi_i$, the sum $\sum_{j=1}^{J_n} \xi_j\Delta_{n,j}^2(A)$ attains only $O\big((k_n/h_n)^{\VC-1}\big)$ different values as $A$ varies over $\AA$.

  Denote by $\|\cdot\|_{\psi_2,\xi}$ the Orlicz norm w.r.t.\ $\psi_2(x):=\exp(x^2)-1$ and the conditional distribution given $\XX$. Then  the inequalities on p.\ 95 and Lemma 2.2.2 of \cite{vdVW96} yield
  \begin{align*}
    E \bigg(\sup_{A\in\AA} \Big|\sum_{j=1}^{J_n} \xi_j \Delta_{n,j}^2(A)\Big|\,\bigg|\, \XX\bigg)
    & \le \sqrt{\log 2} \bigg\| \sup_{A\in\AA} \Big|\sum_{j=1}^{J_n} \xi_j \Delta_{n,j}^2(A)\Big|\bigg\|_{\psi_2,\xi}\\
    & \le K \Big(\VC \log\frac{k_n}{h_n}\Big)^{1/2} \sup_{A\in\AA} \bigg\| \sum_{j=1}^{J_n} \xi_j \Delta_{n,j}^2(A)\bigg\|_{\psi_2,\xi}
  \end{align*}
  for some universal constant $K$. Now, again on the set $B_n$, by Hoeffding's inequality (\cite{vdVW96}, Lemma 2.2.7), the norm on the right hand side is bounded by
  $$ \sqrt{6}\bigg(\sum_{j=1}^{J_n} \Delta_{n,j}^4(A)\bigg)^{1/2} \le 2\sqrt{6} M  (k_n/\log(k_n/h_n))^{-1/2} \bigg(\sum_{j=1}^{J_n} \Delta_{n,j}^2(A)\bigg)^{1/2}.
  $$
  Therefore, by Markov's inequality
  \begin{align}
    &P \Big\{ \sup_{A\in \AA} \Big|\sum_{j=1}^{J_n} \xi_j\Delta_{n,j}^2(A)\Big|>t\Big\} \nonumber\\
    & \le P(B_n^c) + t^{-1}2\sqrt{6\VC}KM\frac{\log(k_n/h_n)}{k_n^{1/2}} E\bigg(\bigg(\sup_{A\in\AA}\sum_{j=1}^{J_n} \Delta_{n,j}^2(A)\bigg)^{1/2}\bigg). \label{eq:quantbound}
  \end{align}
  Next, we apply the Hoffmann-J{\o}rgensen inequality for moments as given in Proposition A.1.5 of \cite{vdVW96}. Let $K^*:= 4\sqrt{6\VC}KM/(1-u_1)$ (with $u_1$ denoting the constant of this proposition) and
  $$ t_1 := K^* \frac{\log(k_n/h_n)}{k_n^{1/2}} E\bigg(\bigg(\sup_{A\in\AA}\sum_{j=1}^{J_n} \Delta_{n,j}^2(A)\bigg)^{1/2}\bigg).
  $$
  If $n$ is sufficiently large such that $P(B_n^c)<(1-u_1)/2$, then the right hand side of \eqref{eq:quantbound} evaluated for $t=t_1$ is less than $1-u_1$, which shows that $t_1$ is not smaller than the $u_1$-quantile of $\sup_{A\in \AA} \Big|\sum_{j=1}^{J_n} \xi_j\Delta_{n,j}^2(A)\Big|$. Hence the Hoffmann-J{\o}rgensen inequality and  $|\Delta_{n,j}(A)|\le 1 $ on $B_n^c$ and $|\Delta_{n,j}(A)|\le 2M\big(k_n/\log(k_n/h_n)\big)^{-1/2}$ on $B_n$ imply
  \begin{align*}
    & E \sup_{A\in \AA} \Big|\sum_{j=1}^{J_n} \xi_j\Delta_{n,j}^2(A)\Big|\\
    & \le C_1\Big( E \sup_{A\in \AA} \Delta_{n,j}^2(A) + t_1\Big)\\
    & \le C_1\big( 4 M^2 k_n^{-1} \log(k_n/h_n) + P(B_n^c) + t_1\big)\\
    & \le o\big((k_nh_n\log(k_n/h_n))^{-1}\big) + C_1K^* k_n^{-1/2}\log(k_n/h_n) \bigg(E\sup_{A\in\AA}\sum_{j=1}^{J_n} \Delta_{n,j}^2(A)\bigg)^{1/2},
  \end{align*}
  the last step following from Condition (R) and Jensen's inequality.
  To sum up, we have shown that
  \begin{align*}
    & k_nh_n\log(k_n/h_n) E\sup_{A\in\AA}\sum_{j=1}^{J_n} \Delta_{n,j}^2(A)\\
    & \le o(1)+2C_1K^* \big(h_n\log^3(k_n/h_n)\big)^{1/2} \Big(k_nh_n\log(k_n/h_n)E\sum_{j=1}^{J_n} \Delta_{n,j}^2(A)\Big)^{1/2}.
  \end{align*}
  Since $h_n\log^3(k_n/h_n)$ tends to 0 by Condition (R), this is only possible if the left hand side tends to 0, which is the assertion.
\end{proof}

\begin{proofof} Theorem \ref{theo:IZhat}.\rm\quad
  Recall that we have to verify \eqref{eq:diffIZhIZt}.
    Lemma \ref{lemma:firstmoment} and Condition (R) imply
  \begin{equation*}
     \sup_{1\le J\le J_n,A\in\AA}  \sum_{j=1}^{J} |E \Delta_{n,j}(A)| =O\Big(\frac{q_n+q_n^*}{h_n}\Big)+o\big(h_n^{-1}\exp(-ck_n)\big) = o\big((h_nk_n)^{-1/2}\big).
  \end{equation*}
  It remains to be shown that
   \begin{equation*}
     \sup_{1\le J\le J_n,A\in\AA} \Big|\sum_{j=1}^{J} \big( \Delta_{n,j}(A)-E \Delta_{n,j}(A)\big)\Big| =o_P\big((h_nk_n)^{-1/2}\big).
   \end{equation*}

   One can easily conclude from Proposition \ref{prop:sumdeltasqconv} that this bound holds for all fixed $J$  and $A$. Therefore, Lemma 2.3.7 of \cite{vdVW96} implies that for all $x>0$ eventually
   \begin{align*}
     P& \Big\{ (h_nk_n)^{1/2}\sup_{1\le J\le J_n,A\in\AA}\Big|\sum_{j=1}^{J} \big( \Delta_{n,j}(A)-E \Delta_{n,j}(A)\big)\Big|>x\Big\}\\
     & \le 3 P \Big\{ 4(h_nk_n)^{1/2}\sup_{1\le J\le J_n,A\in\AA}\Big|\sum_{j=1}^{J}\xi_j \Delta_{n,j}(A)\Big|>x\Big\}
   \end{align*}
   where $\xi_j$ denote iid Rademacher random variables independent of $\XX$. By the same arguments as in the proof of Proposition \ref{prop:sumdeltasqconv}, we see that for fixed $\XX$ and fixed $(\xi_j)_{1\le j\le J_n}$, on the set $B_n$ (defined in \eqref{eq:Bndef}), the sum attains at most $O\big((k_n/h_n)^{\VC}\big)$ different values. Hence, on $B_n$, Hoeffding's inequality yields
   \begin{align}
     P & \Big( 4(h_nk_n)^{1/2}\sup_{1\le J\le J_n,A\in\AA}\Big|\sum_{j=1}^{J}\xi_j \Delta_{n,j}(A)\Big|>x\,\Big|\, \XX\Big)\nonumber \\
     & \le O\Big(\Big(\frac{k_n}{h_n}\Big)^{\VC}\Big)\sup_{1\le J\le J_n,A\in\AA}
     P\bigg( \Big|\sum_{j=1}^{J}\xi_j \Delta_{n,j}(A)\Big|>\frac{x}{4(h_nk_n)^{1/2}}\,\Big|\, \XX\bigg)\nonumber\\
     & \le O\Big(\Big(\frac{k_n}{h_n}\Big)^{\VC}\Big)\sup_{A\in\AA}
      \exp\Big(-\frac{x^2}{32 h_nk_n \sum_{j=1}^{J_n}\Delta_{n,j}^2(A)}\Big).
      \label{eq:finalbound}
   \end{align}
   Now Proposition \ref{prop:sumdeltasqconv} ensures that for all $x,c>0$ with probability tending to 1
    $$ \inf_{A\in\AA}\frac{x^2}{32 h_nk_n \sum_{j=1}^{J_n}\Delta_{n,j}^2(A)}>c\log\Big( \frac{k_n}{h_n}\Big)
    $$
    and so the right hand side of \eqref{eq:finalbound} tends to 0. Since $P(B_n^c)\to 0$, the assertion follows.
\end{proofof}

\subsection{Proofs to Section \ref{sect:tests}}

\begin{proofof} Corollary  \ref{cor:testconstancy}.\rm\quad
  Check that
  \begin{align*}
    T_n^{(KS)} & = \sup_{t\in[0,1], A\in\AA} \frac 1{\sqrt{2}} \big| \widehat{IZ}_{n,t}(A)-t\cdot \widehat{IZ}_{n,1}(A) + \Big(\frac{k_n}{h_n}\Big)^{1/2}(IS_t(A)-t\cdot IS_1(A))\big| \\
     T_n^{(CM)} & =  \sup_{A\in\AA} \int_0^1 \Big( \frac 1{\sqrt{2}} \big(\widehat{IZ}_{n,t}(A)-t\cdot \widehat{IZ}_{n,1}(A) + \Big(\frac{k_n}{h_n}\Big)^{1/2}(IS_t(A)-t\cdot IS_1(A))\big)\Big)^2\, dt
 %   T''_n & =  \int_0^1  \sup_{A\in\AA}\Big( \frac 1{\sqrt{2}} \big(IZ_t(A)-t\cdot IZ_1(A) + (k_n/h_n)^{1/2}(IS_t(A)-t\cdot IS_1(A))\big)\Big)^2\, dt.
  \end{align*}
  Since in the situation of (i) Condition (IS) is trivially fulfilled, the term $IS_t(A)-t\cdot IS_1(A)$ vanishes  and
  $ \int_0^{s\wedge t} c_r(A,B)\, dr = s\wedge t (S_1(A\cap B)-S_1(A)S_1(B))$, the assertion is an immediate consequence of Theorem \ref{theo:IZhat} and the continuous mapping theorem.

 Because $t\mapsto IS_t(A)-t\cdot IS_1(A)$ is a continuous function,  under the assumptions of (ii) $(k_n/h_n)^{1/2}$ $(IS_t(A)-t\cdot IS_1(A))$ converge to $\infty$ or $-\infty$ for all $t$ in a set of positive Lebesgue measure. Hence also the second assertion  follows from Theorem \ref{theo:IZhat}.
\end{proofof}

\smallskip
{\bf Acknowledgement:} I would like to thank Laurens de Haan for helpful discussions in an early stage of this project. Remarks by anonymous referees have led to an improved presentation of our ideas and results.

\bibliographystyle{imsart-number}
\bibliography{ProcLit}

\end{document}